\newtheorem{thm}{Theorem}[section]
\newtheorem{cor}[thm]{Corollary}
\newtheorem{prop}[thm]{Proposition}
\newtheorem{lemma}[thm]{Lema}
\theoremstyle{definition}
\newtheorem{definition}[thm]{Definition}
\newtheorem{remark}[thm]{Remark}
\newtheorem{example}[thm]{Example}
\newtheorem*{teo}{Theorem}
\newtheorem{Teorema}{Theorem}
\DeclareMathOperator{\Hom}{Hom}
\def\X0{X^{\circ}}
\def\Y0{Y^{\circ}}
\numberwithin{equation}{section}       % Number formulas within sections
\begin{document}

\title[Invariant curves for holomorphic foliations on singular surfaces]
{Invariant curves for holomorphic foliations on singular surfaces}

%\author[J.V. Pereira]{Jorge Vit\'{o}rio PEREIRA}
%\address{IMPA, Estrada Dona Castorina, 110, Horto, Rio de Janeiro,
%Brasil}
%\email{jvp@impa.br}

\author[E. A. Santos]{Edileno de Almeida SANTOS}
\address{Faculdade de Ciências Exatas e Tecnologia, Universidade Federal da Grande Dourados - UFGD, Rodovia Dourados - Itahum, Km 12 - Cidade Universitátia, Dourados - MS,
Brazil}
\email{edilenosantos@ufgd.edu.br}

\subjclass{37F75} \keywords{Foliations, Invariant Curves, Birational Geometry}

%\thanks{The author would like to thanks J. V. Pereira for incentive and valuable conversations.}

\begin{abstract}
The Separatrix Theorem of C. Camacho and P. Sad guarantees the existence of invariant curve (separatrix) passing through the singularity of germ of holomorphic foliation on complex surface, when the surface underlying the foliation is smooth or when it is singular and the dual graph of resolution surface singularity is a tree. Under some assumptions, we obtain existence of separatrix even when the resolution dual graph of the surface singular point is not a tree. It will be necessary to require an extra condition of the foliation, namely, absence of saddle-node in its reduction of singularities.
\end{abstract}

\maketitle

\setcounter{tocdepth}{1}
\sloppy
%\tableofcontents

The author would like to thanks J. V. Pereira for incentive, valuable ideas and conversations.

\section{Introduction}\label{S:Introduction}
In the local theory of holomorphic foliations, one of the first questions was about the existence of invariant curve ({\it separatrix}) through isolated singularity of a holomorphic vector field on $(\mathbb C^2,0)$. This problem was posed and investigated by C. Briot and J. Bouquet in 1854. The definitive solution was given many years later, in 1982, when C. Camacho and P. Sad showed in \cite{CSS} the following

\begin{teo}[Separatrix Theorem] {\it Let $v$ be a holomorphic vector field defined on neighbourhood of $0\in \mathbb C^2$ and with isolated singularity at the origin. Then there exists an invariant curve for $v$ through $0\in \mathbb C^2$.}
\end{teo}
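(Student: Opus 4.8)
The plan is to prove the Camacho–Sad Separatrix Theorem via reduction of singularities together with an index-theoretic obstruction argument.

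**My proof sketch:**

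The statement is the classical Camacho–Sad Separatrix Theorem, so I would reproduce the now-standard argument based on Seidenberg's resolution and the Camacho–Sad index.

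First I would apply Seidenberg's theorem on reduction of singularities: by a finite sequence of blow-ups $\pi \colon (\tilde{S}, E) \to (\mathbb{C}^2, 0)$, where $E = \pi^{-1}(0)$ is the exceptional divisor (a finite union of projective lines with normal crossings), the strict transform of the foliation $v$ becomes a foliation $\tilde{\mathcal{F}}$ all of whose singularities are \emph{reduced} (simple). At a reduced singularity $p$, in suitable local coordinates $(x,y)$ the foliation is generated by a vector field with linear part having eigenvalue ratio $\lambda = \lambda(p) \notin \mathbb{Q}_{>0}$; in particular each such $p$ carries at least one smooth local separatrix transverse to or contained in $E$. My goal would be to produce a separatrix of $\tilde{\mathcal{F}}$ that is \emph{not} contained in $E$, since the image under $\pi$ of such a curve is an invariant curve for $v$ passing through $0$.

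The core of the argument is the Camacho–Sad index formula. For each irreducible component $D$ of the exceptional divisor $E$ which is invariant by $\tilde{\mathcal{F}}$, the sum of the local Camacho–Sad indices $\mathrm{CS}(\tilde{\mathcal{F}}, D, p)$ over the singular points $p \in D$ equals the self-intersection number $D \cdot D$ in $\tilde{S}$:
\begin{equation*}
\sum_{p \in D \cap \sing(\tilde{\mathcal{F}})} \mathrm{CS}(\tilde{\mathcal{F}}, D, p) = D \cdot D.
\end{equation*}
The key point is that when the whole of $E$ is invariant (so no separatrix leaves $E$), these indices are constrained, and the resolution dual graph being a \emph{tree} forces a contradiction. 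I would argue by contradiction: assume no separatrix of $v$ exists through $0$, equivalently that every local separatrix at every reduced singularity of $\tilde{\mathcal{F}}$ lies inside $E$, so that $E$ is entirely invariant. One then attaches to each component $D_i$ its self-intersection $D_i \cdot D_i$ and runs the Camacho–Sad combinatorial argument on the dual graph: solving the index relations along the tree produces a component whose required index is incompatible with the reduced nature of the singularities on it (a reduced singularity imposes that its index is not a positive rational of a forbidden type), yielding the contradiction.

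The main obstacle, and the genuinely delicate part, is precisely this final combinatorial/index step: showing that the system of self-intersection and index constraints \emph{cannot} all be satisfied when $E$ is totally invariant. This is where the tree structure of the dual graph is essential in the classical proof, since one orients the tree and propagates an inequality on the indices from the leaves inward, exploiting that at a reduced singularity lying on two invariant components the two Camacho–Sad indices are reciprocal. I expect that managing the signs and the rationality/positivity constraints on the propagated indices — and ruling out the degenerate saddle-node case where an eigenvalue vanishes — will be where the real work lies; indeed it is exactly the failure of this step for non-tree graphs, together with the role of saddle-nodes, that motivates the extra hypotheses in the main results of this paper.
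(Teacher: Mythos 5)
Your outline is the classical Camacho--Sad route (Seidenberg reduction, then an index-theoretic obstruction on the totally invariant exceptional divisor), and that route can be completed; but as written the proposal has a genuine gap, and it sits exactly where the whole difficulty of the theorem lies. You invoke ``the Camacho--Sad combinatorial argument'' without carrying it out, and the version you describe --- orient the tree, use that the two indices at a corner are reciprocal, propagate an inequality from the leaves inward --- is not yet an argument. Two concrete problems. First, reciprocity of indices holds only at \emph{non-degenerate} corners; at a saddle-node corner the index along the component containing the strong separatrix is $0$ and the index on the other side is an unconstrained formal invariant, so the propagation breaks there. Moreover saddle-nodes cannot be ``ruled out'': reduction of singularities genuinely produces them, and total invariance of $E$ does not exclude saddle-node corners (the weak separatrix need not even converge). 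They must be handled structurally --- e.g.\ by deleting the corresponding edges of the dual graph and restarting on a sub-tree, which is exactly Toma's trick used in the paper's proof of Corollary \ref{C:Generalização do Teorema da Separatriz}. Second, even in the absence of saddle-nodes, the step ``solving the index relations along the tree produces an incompatibility'' needs an actual mechanism: what makes it work is negative definiteness of the intersection matrix (Grauert's criterion, Theorem \ref{Grauert Excepcional}), via a continued-fraction/determinant computation showing the propagated corner index can never reach the value forced at the last leaf. Nothing in your sketch identifies this as the engine of the contradiction.

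For comparison, the paper does not prove the statement by index propagation at all. It deduces it as the smooth special case of Corollary \ref{C:Generalização do Teorema da Separatriz}: after reduction, Toma's edge-deletion produces an invariant sub-curve $C\subset E$ whose corners are all non-degenerate; since the dual graph of $C$ is a tree, hence contractible, the residual representation $\rho_{(\mathcal G,C)}$ is automatically trivial, so there is a residual divisor $R=\sum_i\mu_iC_i+\sum_k\delta_kS_k$ with $R\cdot C_j=0$ for all $j$ (the Camacho--Sad index theorem enters once here, through the intersection formula for the normal bundle); then invertibility of the negative-definite intersection matrix forces, by pure linear algebra (Theorem \ref{T:Número Separatrizes} and Corollary \ref{C:Separatriz}), the existence of a separatrix not supported on $C$, which projects to the desired curve. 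In effect the paper packages your ``delicate combinatorial step'' into a single rank computation; that computation, together with an honest treatment of saddle-nodes, is precisely what your proposal leaves open.
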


In 1988, C. Camacho showed in \cite{C} an extension of this result to include foliations on some singular surfaces:

\begin{teo} {\it Let $X$ be a complex normal irreducible surface. Suppose the dual graph of resolution singularity at $p\in X$ is a tree. Then any germ of holomorphic foliation, singular at $p$, has an invariant curve through $p$.}
\end{teo}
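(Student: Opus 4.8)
The plan is to resolve simultaneously the surface singularity and the singularities of the foliation, and then to run the Camacho--Sad index argument on the exceptional divisor, using the tree hypothesis to make the combinatorics close up. First I would choose a resolution $\pi\colon(\tilde X,E)\to(X,p)$ of the normal surface germ, with $\tilde X$ smooth and $E=\pi^{-1}(p)=\bigcup_i E_i$ a normal crossings divisor whose dual graph is the given tree. Pulling back $\mathcal F$ and applying Seidenberg's reduction of singularities, I may blow up further points of $E$ so as to assume that $\tilde{\mathcal F}:=\pi^{\ast}\mathcal F$ is reduced. Each such blow-up is centred at a point of $E$: blowing up a smooth point of $E$ attaches a new leaf to the dual graph, while blowing up a corner $E_i\cap E_j$ merely subdivides the edge joining $i$ and $j$; in either case the graph stays a tree. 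Since $\pi$ is a biholomorphism off $E$, a separatrix of $\mathcal F$ through $p$ is the same thing as a germ of $\tilde{\mathcal F}$-invariant curve meeting $E$ but not contained in $E$, so it suffices to produce such a curve upstairs.

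Next I would dispose of the dicritical case. If some component $E_i$ is not $\tilde{\mathcal F}$-invariant, then $\tilde{\mathcal F}$ is transverse to $E_i$ away from finitely many points, and the leaf through a generic point of $E_i$ is a disc meeting $E_i$ transversally; its image under $\pi$ is an $\mathcal F$-invariant curve through $p$, that is, a separatrix, and we are done. Hence assume every $E_i$ is invariant and argue by contradiction, supposing $\mathcal F$ has no separatrix through $p$. Then the only $\tilde{\mathcal F}$-invariant curve germs meeting $E$ are the $E_i$ themselves, so at each $q\in\sing(\tilde{\mathcal F})\cap E$ every local separatrix lies in $E$. At a corner this holds automatically, both branches of $E$ being separatrices. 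At a point $q$ lying on a single component $E_i$, a non-degenerate reduced singularity would carry a second separatrix transverse to $E_i$, hence outside $E$, which is excluded; therefore $q$ must be a saddle-node whose unique convergent (strong) separatrix is $E_i$, its central manifold being transverse to $E$ and thus forced to be purely formal. This is exactly the place where saddle-nodes are indispensable, and where the extra hypothesis of the present paper — forbidding them — will later force a different conclusion.

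Finally I would apply the Camacho--Sad index theorem, which for each invariant component gives $E_i\cdot E_i=\sum_{q\in E_i}\mathrm{CS}(\tilde{\mathcal F},E_i,q)$, together with the reciprocity $\mathrm{CS}(\tilde{\mathcal F},E_i,q)\,\mathrm{CS}(\tilde{\mathcal F},E_j,q)=1$ at a non-degenerate corner $q=E_i\cap E_j$ and the vanishing of the index along the strong separatrix of a saddle-node. The crux — and the step I expect to be the main obstacle — is the purely arithmetic argument showing that these relations are incompatible with the negative definiteness of the intersection matrix $(E_i\cdot E_j)$, which holds by the Grauert--Mumford criterion precisely because $E$ is the exceptional divisor of a resolution of a normal surface singularity.

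Following Camacho and Sad, the idea is to orient and propagate the indices through the tree from a terminal vertex inward, and to show that the sign constraints imposed by the index equations force a monotone descent that cannot terminate on a finite negative-definite configuration; the tree hypothesis is what guarantees the absence of cycles that would otherwise break this propagation. The resulting contradiction shows that the assumption of no separatrix is untenable, completing the proof.
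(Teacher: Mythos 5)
Your setup is sound and follows the classical route: resolution of the surface germ, stability of the tree under the further blow-ups needed for Seidenberg reduction, the dicritical case, and the observation that under the no-separatrix assumption every singularity on $E$ away from the corners must be a saddle-node whose strong separatrix lies in $E$. But the proof stops exactly where the theorem lives. You reduce everything to ``the purely arithmetic argument showing that these relations are incompatible with the negative definiteness of the intersection matrix,'' and then describe that argument only as an expected ``monotone descent'' propagated from a terminal vertex of the tree. That step is the entire content of the Camacho--Sad theorem and of Camacho's extension to singular surfaces; no quantity is defined that descends, no reason is given why it descends, and no reason is given why the descent cannot terminate. Deferring the crux to an expectation leaves a genuine gap, not a sketch of a complete argument.

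There is also a concrete reason the propagation as you describe it cannot work: at corners $E_i\cap E_j$ that are saddle-nodes, the reciprocity $CS(\mathcal G,E_i,q)\,CS(\mathcal G,E_j,q)=1$ you invoke is false --- the index along the strong separatrix vanishes, while the index along the weak (central) branch is an unconstrained invariant --- so no index information passes across such edges, and a naive sweep through the whole tree breaks down. This is precisely why the paper (following Toma) first \emph{prunes} the dual graph: it deletes every edge corresponding to a saddle-node crossing and keeps, at each deletion, the connected component on the strong-separatrix side, ending with a connected sub-curve $C\subset E$ whose corners are all non-degenerate and which is never a weak separatrix. The paper then does not propagate indices at all; since the subtree is contractible, the residual representation $\rho_{(\mathcal G,C)}$ is trivial, one builds a residual divisor $R=\sum_i\mu_iC_i+\sum_k\delta_kS_k$ with $R\cdot C_i=0$ for all $i$, and invertibility (negative definiteness) of the intersection matrix forces the existence of separatrices off $C$ (Theorem \ref{T:Número Separatrizes}, Corollary \ref{C:Separatriz}). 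To repair your proof you must either carry out the full Camacho--Sad/Camacho induction --- including the pruning step for saddle-nodes and the treatment of an arbitrary negative-definite tree, not only one arising from blowing up a smooth point --- or switch to a residue/intersection-form argument of this kind.
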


A very elegant proof was given by M. Sebastiani in \cite{Sebastiani} (see also the exposition of M. Brunella in \cite{Brunella}). In the non singular case, M. Toma in \cite{Toma} and J. Cano in \cite{Cano} gives alternative proofs. Here we explore the surface singular case, but without the hypothesis on the dual graph. %In this way we find some extensions to include singularities whose dual graph of resolution is not a tree.

A {\it holomorphic foliation} $\mathcal F$ on a smooth complex surface $X$ is given by an open covering $\{ U_i \} $ and holomorphic vector fields with isolated singularities $X_i$ over each $U_i$ such that whenever $U_i\cap U_j\neq \emptyset$ there exists an invertible holomorphic function $g_{ij}$ satisfying $X_i=g_{ij}X_j$. The collection $\{ g_{ij}^{-1} \}$ defines the holomorphic line bundle  $T\mathcal F$, called the {\it tangent bundle} of  $\mathcal F$. The dual os  $T\mathcal F$ is the {\it cotangente bundle}  $T^*\mathcal F$, also called the {\it canonical bundle}  $K\mathcal F$. Recall that a {\it reduced foliation}  $\mathcal F$ is a foliation such that every singularity $p$ is reduced in Seidenberg's sense: there is a vector field $X$ generating  $\mathcal F$ on neighbourhood of $p$ such that the eigenvalues of the linear part of $X$ are $1$ and $\lambda$, where $\lambda$ is not a positive rational number. If $\lambda=0$, the singularity $p$ is called a {\it sadlle-node}; otherwise it is called a {\it non degenerate singularity}. The well-known {\it Seidenberg's Theorem} states that after a finite sequence of blowing-ups over a singularity $p$, the induced foliation is reduced  along the strict transform of $p$ (see {\bf Theorem 1}, \cite{Brunella}, page 13). This process is called {\it reduction of singularities}.

If $X$ is a surface with only normal singularities, a {\it holomorphic foliation} on $X$ is a holomorphic foliation on $X-Sing(X)$. The {\it tangent} sheaf $T_{\mathcal F}$ and the {\it normal} sheaf $N_{\mathcal F}$ are defined by taking the direct images via the inclusion $X-Sing(X)\rightarrow X$ of the tangent and normal sheaves of the underlying foliation on $X-Sing(X)$.

In \cite{C}, C. Camacho gives an example of non existence of separatrix. Let $\pi:Y\rightarrow X$ be a resolution of the normal surface singularity $p\in X$, where $\pi^{-1}(p)=E=E_1+E_2+E_3$ is a curve with simple normal crossings and each $E_j$ is a smooth rational curve, with self-intersections $E_1^2=-2$, $E_2^2=-2$ and $E_3^2=-3$. To find a reduced foliation without saddle-node on neighbourhood of the curve $E$ such that $E$ is $\mathcal F$-invariant and $Sing(\mathcal F)\cap E=Sing(E)$, consider the dual graph $\Gamma$ of $E$, which is a cycle, as illustrated bellow:

%\begin{figure}[!htb]
%\centering
%\includegraphics[scale=0.3]{ciclo_tablet.png}
%\caption{Cycle of three invariant curves.}
%\end{figure}

$$\xymatrix@R5pt@C7pt{
 & \ar[ldd]_{\lambda_{1,2}} \bullet_{E_1} \ar[rdd]^{\lambda_{1,3}}  &  \\ 
 &  & \\ 
\bullet_{E_2} \ar[rr]_{\lambda_{2,3}}&  & \bullet_{E_3}
} $$
where $\lambda_{ij}=CS(\mathcal{F}, C_i,p_{ij})$ is the {\it Camacho-Sad Index} (see \cite{Brunella}), $\{p_{ij}\}=E_i\cap E_j$. Thus $\lambda_{1,2}+\lambda_{1,3}=E_1^2$, $\frac{1}{\lambda_{1,2}}+\lambda_{2,3}=E_2^2
$, $\frac{1}{\lambda_{2,3}}+\frac{1}{\lambda_{1,3}}=E_3^2.$
These conditions are enough to construct the foliation on neighbourhood of the curve $E$ as desired (if the $\lambda_{ij}$ are not rational numbers, then the foliation will be reduced and without saddle-nod; see proof by Lins-Neto of the main theorem in \cite{Lins-Neto}). In this way, if we contract the exceptional curve $E$ to the surface singular point $p$, then we have a foliation without separatrix trough the singularity $p$.

Hence we can ask now:

{\it What conditions can we require from the foliation to exist separatrix even when the resolution dual graph is not a tree?}

%The sections \ref{chapter: R} and \ref{chapter: ES} develop answers to the above question.
First, in Section \ref{chapter: R} we investigate relationships between foliations and their invariant compact curves by means of the logarithmic co-normal bundle, in particular we analyse the situation in which such a bundle restricted to the curve is trivial, which leads us to the notions of {\it residues} and {\it residual representation}. %(Theorems \ref{T:Logarítmica Global-Local} and \ref{T:Logarítmica Local-Global}).
From these ideas, in Section \ref{chapter: ES} we explore the Separatrix Theorem. The main result is:

\begin{Teorema}[Theorem \ref{T:Gorenstein}]\label{T:Teorema 1}
{\it Let $\mathcal F$ be a foliation on the normal singular surface $X$. If the foliation has no saddle-node in its resolution over the singularity $p\in X$ and the normal sheaf $N_{\mathcal F}$ is $\mathbb Q$-Gorenstein, then $\mathcal F$ has a separatrix through $p$.}
%{\it Let $\mathcal F$ be a foliation on the normal singular surface $X$. Let $f:Y\rightarrow X$ be the resolution of the singularity $p\in X$ such that the induced foliation $\mathcal G=f^*\mathcal F$ has only reduced singularities in $E=f^{-1}(p)$. If $\mathcal G$ has no saddle-node in $E$ and the normal sheaf of $\mathcal F$ is $\mathbb Q$-Gorenstein, then $\mathcal F$ has separatrix trough $p$.}
\end{Teorema}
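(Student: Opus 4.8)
The plan is to pass to a resolution and reduce the theorem to an impossibility statement about a purely combinatorial configuration, which the $\mathbb{Q}$-Gorenstein hypothesis will rule out. Let $\pi\colon Y\to X$ be a morphism resolving simultaneously the normal singularity $p\in X$ and the singularities of the foliation, so that the pulled-back foliation $\mathcal F'=\pi^{*}\mathcal F$ is reduced along the exceptional divisor $E=\pi^{-1}(p)=\sum_i E_i$, with dual graph $\Gamma$; by hypothesis $\mathcal F'$ has no saddle-node. A separatrix of $\mathcal F$ through $p$ is the same as an invariant curve of $\mathcal F'$ meeting $E$ but not contained in it, so it suffices to produce one. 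I first dispose of the easy configurations: if some $E_i$ is non-invariant (dicritical), a generic leaf through $E_i$ projects to a separatrix; and if $\mathcal F'$ has a singularity on the smooth part of some $E_i$, then, being non-degenerate (no saddle-node), it carries two smooth transverse separatrices, and the branch transverse to $E_i$ projects to a separatrix through $p$. This leaves the critical configuration: every $E_i$ is invariant and $\mathrm{Sing}(\mathcal F')\cap E=\mathrm{Sing}(E)$, i.e. all singularities sit at the corners $p_{ij}=E_i\cap E_j$. This is exactly the shape of Camacho's example, and the goal becomes to show it cannot occur when $N_{\mathcal F}$ is $\mathbb{Q}$-Gorenstein.

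Second, I bring in the residues of Section \ref{chapter: R}. The $\mathbb{Q}$-Gorenstein hypothesis says the Weil divisor class $N_{\mathcal F}$ is $\mathbb{Q}$-Cartier, equivalently that $[N_{\mathcal F}]$ is torsion in the local class group $\mathrm{Cl}(X,p)$; the point is that when $\Gamma$ contains a cycle this group acquires a free part (coming from $H^{1}(E,\mathcal O_E)$), so being torsion is a genuine restriction --- precisely the restriction violated in Camacho's example. In the language of Section \ref{chapter: R}, this torsionness makes the logarithmic conormal bundle of $\mathcal F'$ torsion along $E$, so that the residual representation is defined: each invariant component $E_i$ carries a residue $r_i\in\mathbb{C}^{*}$, well defined up to a common scalar. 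At each corner the non-degenerate linear model gives $CS(\mathcal F',E_i,p_{ij})=-\,r_j/r_i$, and the reciprocity $CS(\mathcal F',E_i,p_{ij})\cdot CS(\mathcal F',E_j,p_{ij})=1$ is automatic.

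Third, I combine this with the Camacho--Sad index theorem. Summing the indices along each invariant component gives, for every $i$, the identity $E_i^{2}=\sum_{j\ne i}(E_i\cdot E_j)\,CS(\mathcal F',E_i,p_{ij})=-\tfrac{1}{r_i}\sum_{j\ne i}(E_i\cdot E_j)\,r_j$, hence $E_i^{2}\,r_i+\sum_{j\ne i}(E_i\cdot E_j)\,r_j=0$. Writing $M=(E_i\cdot E_j)$ for the intersection matrix and $r=(r_i)$, this is exactly $Mr=0$. But $M$ is negative definite, since $E$ is the exceptional divisor of a resolution of a normal surface singularity; therefore $r=0$, contradicting $r_i\in\mathbb{C}^{*}$. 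Thus the critical configuration is incompatible with $N_{\mathcal F}$ being $\mathbb{Q}$-Gorenstein, one of the easy configurations must occur, and the desired separatrix is produced. As a sanity check, for a tree the class group is finite, so $N_{\mathcal F}$ is automatically $\mathbb{Q}$-Gorenstein and the same argument recovers Camacho's theorem.

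The main obstacle is the middle step: turning the algebraic $\mathbb{Q}$-Gorenstein condition into the analytic statement that a consistent residual representation with nowhere-vanishing residues exists, and that these residues govern the corner indices by $CS=-r_j/r_i$. This is where the analysis of Section \ref{chapter: R} --- the triviality up to torsion of the logarithmic conormal bundle restricted to $E$ and the resulting residue calculus --- carries the weight. One must also verify that no residue $r_i$ vanishes (a vanishing $r_i$ would force $E_i$ to be non-invariant or dicritical, a case already handled), so that the relations $CS=-r_j/r_i$ are legitimate and the conclusion $Mr=0$ is genuinely contradictory.
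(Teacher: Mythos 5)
Your overall architecture is the same as the paper's (resolve, reduce to the configuration where all singularities sit at the corners of the invariant exceptional divisor, and derive a contradiction from residues plus the Camacho--Sad index theorem plus negative definiteness; your relation $Mr=0$ is exactly the residual--divisor computation behind Theorem \ref{T:Número Separatrizes}). But the two bridging steps you leave as ``the main obstacle'' are precisely where the paper's proof lives, and one of them as you state it is false. First, the passage from $N_{\mathcal F}$ being $\mathbb Q$-Gorenstein to the logarithmic conormal bundle being torsion \emph{along} $E$ is not in Section \ref{chapter: R} and is not formal: the paper needs (a) an analytic extension input, Ueda's Lemma \ref{Extension}, to extend the nowhere-vanishing section of $N_{\mathcal G}^{\otimes k}$ from $V-E$ meromorphically across $E$ and conclude $N_{\mathcal G}^{\otimes k}=\mathcal O_Y(\sum_i a_iE_i)$ near $E$ (Proposition \ref{P:Gorenstein}), and then (b) the intersection computation $N_{\mathcal G}\cdot E_j=E\cdot E_j$ (valid only because there are no saddle-nodes), which together with negative definiteness forces $a_i=k$, i.e.\ $N_{\mathcal G}^{\otimes k}=\mathcal O_Y(kE)$. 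Note that this is where the no-saddle-node hypothesis does real work at the bundle level; in your write-up it is only used for non-degeneracy of the corner singularities, so your argument never actually pins down the coefficients $a_i$.

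Second, and more seriously, you conflate \emph{torsion} with \emph{trivial}. Torsion of $N^*_{\mathcal G}\otimes\mathcal O_Y(E)\big|_E$ only gives, via Proposition \ref{P:Resíduo e Representação Generalizados}, a residual representation $\rho\colon\pi_1(\Gamma)\to\mathbb C^*$ with values in $k$-th roots of unity; it does \emph{not} give global constants $r_i\in\mathbb C^*$ with $CS(\mathcal F',E_i,p_{ij})=-r_j/r_i$ --- the existence of such $r_i$ ``up to a common scalar'' is exactly the statement that $\rho$ is trivial in $H^1(\Gamma,\mathbb C^*)$. For a cycle with $\rho$ sending the generator to $-1$, the bundle is $2$-torsion but no such $r_i$ exist, and your system $Mr=0$ cannot even be written down. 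The paper closes this gap with the covering trick (Theorem \ref{T:Trivial Geral Generalizado}): extend the flat bundle to a neighbourhood of $E$, pass to the cyclic $k$-covering that trivializes it, observe that the preimage of $E$ is still exceptional by Grauert's Theorem \ref{T:Grauert Excepcional Geral}, and only then apply the trivial-representation case (Theorem \ref{T:Trivial Geral}) upstairs to get the contradiction. Without this step your argument proves the theorem only under the stronger hypothesis that the residual representation is trivial, which is Theorem \ref{T:Trivial Geral}, not Theorem \ref{T:Gorenstein}. (A minor further point: your sanity check that a tree forces the local class group to be finite is wrong when components have positive genus; the tree case follows instead from $H^1(\Gamma,\mathbb C^*)=1$.)
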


In the above theorem, a sheaf $\mathcal S$ on a singular surface is called {\it $\mathbb Q$-Gorenstein} if there is a positive integer $k>0$ such that the $k^{th}$ tensor power $\mathcal S^{\otimes k}$ is locally trivial. We say that a foliation {\it has no saddle-node} if the induced foliation after reduction of singularities has no saddle-node.% (even at the singularities). 

In this paper we assume the reader is familiar with the theory of singular holomorphic foliations on surfaces like presented by the first chapters of \cite{Brunella}.

\section{Residues and Representation}\label{chapter: R}

\subsection{Basic Conceptions}\label{S:Conceitos Básicos}

Let $X$ be a smooth complex surface and $C\subset X$ be a curve.
\begin{definition}\label{D:Forma Logarítmica}
A {\it logaritimic $1$-form} $\omega$ on an open set $U\subset X$ with poles on $C$ is a meromorphic $1$-form on $U$ with the following property: for any $p\in U$ there is a neighborhood $V\subset U$ of $p$ such that
\[
\omega\mid_V=\omega_0+\sum_{i=1}^ng_i\frac{df_i}{f_i}
\]
where $\omega_0$ is a holormophic $1$-form on $V$, $f_i$ and $g_i$ are holomorphic functions on $V$ and each $f_i$ is a reduced equation of an irreducible component of $C\cap V$.
\end{definition}

Generically we are interesting in the situation where $C=\sum_{i=1}^nC_i\subset X$ is a {\it simple normal crossing divisor}, in which case we have the following exact sequence:
\[
0\rightarrow \Omega^1_X\rightarrow \Omega^1_X(\log C)\rightarrow \bigoplus_{i=1}^n \mathcal O_{C_i}\rightarrow 0
\]
where $\Omega^1_X(\log C)$ is the locally free sheaf (of rank $2$) of logarithmic $1$-forms (see {\bf Lemma 8.16} of \cite{Voisin}) and the last morphism is given by the {\it residue}, defined as follow: if locally $\omega\mid_V=\omega_0+\sum_{i=1}^ng_i\frac{df_i}{f_i}$,
then the residue of $\omega$ along $C_i$ is given by $Res(\omega)\vert_{C_i}=g_i\vert_{\{f_i=0\}}$ (see \cite{Brunella}, pages 78 to 83).
We can see easily that this definition is independent of our choices ($f_i$, $g_i$, $\omega_0$ etc.).

In the context of foliations we have the following definition.

\begin{definition}\label{D:Folheação Logarítmica} Let $\mathcal F$ be a foliation on $X$ and suppose that the curve $C$ is $\mathcal F$-invariant. We say that the foliation $\mathcal F$ is {\it logarithmic} on $(X,C)$ when given any $p\in C$,  $\omega$ a $1$-form defining $\mathcal F$ on neighbourhood $V$ of $p$ and $f$ a reduced equation of $C\cap V$, the meromorphic $1$-form $\frac{\omega}{f}$ is logarithmic with poles on $C$.
\end{definition}

If the curve $C$ has only  normal crossing singularities, then any reduced foliation $\mathcal F$ on $X$ tangent to $C$ is logarithmic on $(X,C)$. In that case, the bundle $$L=N^*_{\mathcal F}\otimes \mathcal O_X(C)$$
is called {\it logarithmic conormal bundle} along $C$. If the curve is a divisor with {\it simple} normal crossing singularities, then the morphism of residue induces an exact sequence:
\[
0\rightarrow N^*_{\mathcal F}\rightarrow N^*_{\mathcal F}\otimes \mathcal O_X(C)\rightarrow \bigoplus_{i=1}^n \mathcal O_{C_i}
\]
Observe that a collection of local sections $\eta_j$ of $N^*_{\mathcal F}\otimes \mathcal O_X(C)$ defined on open sets $V_j$ determines a collection of residues $Res_{V_j \cap C_i}(\eta_j)$ as local sections of $N^*_{\mathcal F}\otimes \mathcal O_X(C)\vert_{C_i}$, $i=1,...,n$.

%\begin{example}
%Let $$\omega=\lambda_1\frac{dy}{y}-\lambda_2\frac{dx}{x}$$ be a {\it closed} logarithmic $1$-form. Then
%$$Res_{\{x=0\}}(\omega)=-\lambda_2$$
%and
%$$Res_{\{y=0\}}(\omega)=\lambda_1.$$
%\end{example}

The following normal form enable us to calculate the quotient of residues for a reduced non degenerate singularity:

\begin{thm}[{\bf THÉORÈME 1}, \cite{MM}, page 521]\label{M-M}
Let $\mathcal F$ be a foliation given by a holomorphic $1$-form $\omega$  on a neighbourhood of $0\in \mathbb C^2$ with isolated singularity at $0$ and linear part $\lambda_1xdy-\lambda_2ydx$. Suppose that $\lambda_1, \lambda_2$ are not zero and $\lambda_1/\lambda_2$ and $\lambda_2/\lambda_1$ are not integers $>1$. Then there is a change of coordinates in a neighbourhood of the origin such that, in the new coordinates, the foliation is given by the $1$-form
\[
\lambda_1x(1+xy(...))dy-\lambda_2y(1+xy(...))dx
\] 
\end{thm}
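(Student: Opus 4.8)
The plan is to reduce $\omega$ to the asserted shape in three stages: first straighten the two separatrices onto the coordinate axes, then compute the formal normal form, and finally argue convergence.

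First I would locate the separatrices. The linear part $\lambda_1 x\,dy-\lambda_2 y\,dx$ has eigendirections along the two axes, and the dual vector field is $Z=\lambda_1 x\,\partial_x+\lambda_2 y\,\partial_y+(\text{h.o.t.})$. Searching for an invariant curve $y=\varphi(x)$ tangent to the $x$-axis leads to a recursion whose $k$-th coefficient requires dividing by $\lambda_2-k\lambda_1$; this never vanishes for $k\ge 2$ precisely because $\lambda_2/\lambda_1$ is not an integer $>1$, so by Briot--Bouquet the formal solution converges to an analytic separatrix. Symmetrically, $\lambda_1/\lambda_2\notin\mathbb{Z}_{>1}$ produces a second analytic separatrix tangent to the $y$-axis. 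These two curves are smooth and have distinct tangents, hence are transverse, so an analytic change of coordinates carries them to $\{x=0\}$ and $\{y=0\}$. After this change, invariance of the axes forces $\omega=y\,P(x,y)\,dx+x\,Q(x,y)\,dy$ with $P(0)=-\lambda_2$ and $Q(0)=\lambda_1$; equivalently, $Z=\lambda_1 x\,\partial_x+\lambda_2 y\,\partial_y+(\text{h.o.t.})$ still has both axes invariant.

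Next I would run the Poincaré--Dulac procedure on $Z$. The operator $\mathrm{ad}_{Z_{\mathrm{lin}}}$ acts on $x^a y^b\partial_x$ with eigenvalue $(a-1)\lambda_1+b\lambda_2$ and on $x^a y^b\partial_y$ with eigenvalue $a\lambda_1+(b-1)\lambda_2$, so every non-resonant monomial can be removed by a formal coordinate change and only the resonant ones survive. A short analysis of the two resonance equations pins down what remains: a pure monomial $y^b\partial_x$ is resonant only when $\lambda_1/\lambda_2=b$ and a pure monomial $x^a\partial_y$ only when $\lambda_2/\lambda_1=a$, both excluded by hypothesis, while any other resonant monomial must have $a\ge 1$ and $b\ge 1$ and hence, after factoring out the leading $x$ or $y$, is divisible by $xy$. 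This yields the formal normal form
\[
Z=\lambda_1 x\bigl(1+xy\,\hat f\bigr)\,\partial_x+\lambda_2 y\bigl(1+xy\,\hat g\bigr)\,\partial_y,
\]
and dualizing gives $\omega=\lambda_1 x(1+xy\hat f)\,dy-\lambda_2 y(1+xy\hat g)\,dx$, exactly the asserted shape --- at the level of formal power series.

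The hard part will be convergence. Since $\lambda_1/\lambda_2$ may be negative or non-real, the singularity can lie in the Siegel domain, where the Poincaré--Dulac transformation is obstructed by small divisors and need not converge, so one cannot simply invoke the polynomial normal form of the Poincaré domain. The way I would push this through is to exploit that we are \emph{not} trying to linearize --- the $xy$-tails are retained --- so it suffices to match the foliation up to analytic orbital equivalence rather than conjugacy. Concretely, I would study the holonomy of one separatrix, a germ in $\mathrm{Diff}(\mathbb{C},0)$ with multiplier $e^{2\pi i\lambda_2/\lambda_1}$, show (via a transverse/holonomy construction in the spirit of Mattei--Moussu) that the analytic orbital class of the germ is determined by the conjugacy class of this holonomy, and then verify that each convergent model of the stated form realizes the prescribed holonomy. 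Reconciling the formal normalization above with this holonomy-based convergence --- that is, upgrading the formal series $\hat f,\hat g$ to convergent ones --- is where the real work lies.
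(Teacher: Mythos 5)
This theorem is not proved in the paper at all --- it is quoted from \cite{MM} --- so the comparison must be with the standard argument, which your attempt begins correctly but then abandons for a route that cannot be completed. Your first stage is exactly right: the hypotheses $\lambda_2/\lambda_1,\ \lambda_1/\lambda_2\notin\mathbb{Z}_{>1}$ are precisely what the Briot--Bouquet recursion needs, giving two convergent separatrices tangent to the axes, and after straightening them one has $\omega=x\,b(x,y)\,dy-y\,a(x,y)\,dx$ with $b(0,0)=\lambda_1$, $a(0,0)=\lambda_2$. The gap is everything after that, and you flag it yourself. The Poincar\'e--Dulac stage proves the wrong statement: eliminating \emph{all} non-resonant monomials leaves, when $\lambda_2/\lambda_1\notin\mathbb{Q}$, no nonlinear terms whatsoever, so your ``formal normal form'' is the formal linearization --- a statement strictly stronger than the theorem, and one that is generically \emph{false} analytically in the Siegel domain because of small divisors. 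Hence the divergence you then confront is not ``the real work'' of this theorem; it is an artifact of having normalized too much, and it cannot be repaired: the transformation your method produces genuinely diverges. The holonomy-based fallback (orbital classification by holonomy plus realization of a prescribed holonomy) is only named, never executed, and it invokes results far deeper than the statement being proved.

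The idea you are missing is that the stated normal form constrains the coefficients only \emph{along the two axes}, so no infinite-order normalization (hence no small-divisor problem) is ever needed. After your first stage it suffices to make the four restrictions $b(x,0)$, $b(0,y)$, $a(x,0)$, $a(0,y)$ constant, for then $a-\lambda_2$ and $b-\lambda_1$ vanish on both axes, hence lie in the ideal $(xy)$, which is exactly the assertion. This is achieved by three convergent, explicitly integrable steps: (i) multiply $\omega$ by the unit $u(x,y)=\lambda_1\lambda_2/\bigl(b(x,0)\,a(0,y)\bigr)$, which gives $b(x,0)\equiv\lambda_1$ and $a(0,y)\equiv\lambda_2$; (ii) apply the axis-preserving change $(x,y)\mapsto(x,yV(x))$ where $V'/V=\bigl(a(x,0)-\lambda_2\bigr)/\bigl(x\,b(x,0)\bigr)$ --- holomorphic because $a(x,0)-\lambda_2=O(x)$ --- which gives $a(x,0)\equiv\lambda_2$ while preserving the two normalizations already made; (iii) symmetrically, $(x,y)\mapsto(xU(y),y)$ with $U'/U=\bigl(b(0,y)-\lambda_1\bigr)/\bigl(y\,a(0,y)\bigr)$ gives $b(0,y)\equiv\lambda_1$ and preserves the other three. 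Each step requires only exponentiating a primitive of a holomorphic function, so convergence is automatic; the arithmetic hypothesis on $\lambda_1/\lambda_2$ is used once and for all in the Briot--Bouquet step you already carried out.
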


\begin{example}\label{Exemplo Resíduo}
As a consequence of the above theorem, we conclude that if the foliation $\mathcal F$ has a reduced non degenerate singularity at $0\in \mathbb C^2$, then we can find a logarithmic expression for the foliation:
\[
\eta=\lambda_1(1+xy(...))\frac{dy}{y}-\lambda_2(1+xy(...))\frac {dx}{x}
\]
in such a way that $Res_{\{x=0\}}(\eta)=-\lambda_2$ and $Res_{\{y=0\}}(\eta)=\lambda_1.$
Hence the quotient of residues is minus the quotient of eigenvalues. 
\end{example}

Note that if the curve $C$ is compact and $\omega$ is a section of $N^*_{\mathcal F}\otimes \mathcal O_X(C)$ on neighbourhood of $C$, then the residue along each irreducible component $C_i$ is constant.

Fix a compact curve $C\subset X$ with simple normal crossing singularities. Let $\mathcal F$ be a reduced foliation tangent to $C$.

\begin{prop}\label{P:Cociclo}
If the reduced foliation $\mathcal F$ is without saddle-node in $C$, then there is a neighbourhood $U$ of $C$ and logarithmic $1$-forms on open subsets of $U$ defining $N^*_{\mathcal F}\otimes \mathcal O_X(C)\vert_U$ whose residues are locally constant.
\end{prop}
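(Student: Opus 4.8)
The plan is to work locally on $C$ and then patch. First I would fix a neighbourhood $U$ of $C$ and choose an open cover $\{V_j\}$ of $U$ subordinate to the natural stratification of $C$, so that each $V_j$ meets $C$ in one of three model configurations: a regular point of $\mathcal F$ on the smooth locus of $C$, a corner $p_{ik}=C_i\cap C_k$ (a singular point of $C$), or a singular point of $\mathcal F$ lying on the smooth locus of $C$. On each $V_j$ I would exhibit an explicit logarithmic $1$-form $\eta_j$ that generates $L:=N^*_{\mathcal F}\otimes\mathcal O_X(C)$ over $V_j$, compute its residues along the components $C_i$ meeting $V_j$, and check that on overlaps two such generators differ by a unit whose restriction to each $C_i$ is locally constant. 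The gluing step is formal: since the residue morphism is $\mathcal O_X$-linear, a relation $\eta_j=g_{jk}\eta_k$ on $V_j\cap V_k$ forces $\Res(\eta_j)\vert_{C_i}=g_{jk}\vert_{C_i}\,\Res(\eta_k)\vert_{C_i}$, so once every residue is a constant the transition data $g_{jk}\vert_{C_i}$ is automatically locally constant. Equivalently, the local residues assemble into the canonical residue section of $L\vert_{C_i}$, and the whole statement amounts to showing that this section has no zeros, i.e. can be trivialized by constants.

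The local computations are where the hypotheses enter. At a regular point I would straighten $\mathcal F$ to the form $dy$ with $C=\{y=0\}$, so that $\eta_j=dy/y$ has residue $1$. At a singular point the absence of saddle-nodes guarantees that the singularity is non-degenerate and reduced, whence Theorem \ref{M-M} and Example \ref{Exemplo Resíduo} apply and furnish coordinates in which $\mathcal F$ is defined by $\lambda_1x(1+xy(\cdots))dy-\lambda_2y(1+xy(\cdots))dx$ with $\lambda_1\lambda_2\neq0$. At a corner both separatrices lie in $C$, so $f=xy$ is a reduced equation of $C\cap V_j$ and $\eta_j=\omega/(xy)$ is the logarithmic generator of $L$; by Example \ref{Exemplo Resíduo} its residues along the two branches are $\lambda_1$ and $-\lambda_2$, which are nonzero constants. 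Thus on the charts coming from regular points and corners the residues are constant, and by the $\mathcal O_X$-linearity remark the resulting generators patch with locally constant transition data along each $C_i$.

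The main obstacle is the third configuration, a singularity of $\mathcal F$ sitting on the smooth locus of $C$. There only one separatrix, say $\{y=0\}$, lies in $C$, the reduced equation is $f=y$, and the logarithmic generator of $L$ becomes $\eta_j=\omega/y=\lambda_1x(1+xy(\cdots))\,dy/y-\lambda_2(1+xy(\cdots))\,dx$, whose residue along $C$ equals $\lambda_1x$ and hence \emph{vanishes at the singular point}. No unit multiple can turn this into a constant, so local constancy genuinely fails unless such points are absent; equivalently, the canonical residue section acquires a zero exactly at them. The key point to isolate is therefore that such a point carries a separatrix transversal to $C$, so that in the regime where the Proposition is meant to be applied one may reduce to the case $\mathrm{Sing}(\mathcal F)\cap C\subseteq\mathrm{Sing}(C)$, in which only regular points and corners occur and the construction above yields the desired locally constant residues. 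It is precisely here that the no-saddle-node hypothesis is indispensable: a saddle-node has a vanishing eigenvalue, Theorem \ref{M-M} no longer applies, and the clean logarithmic normal form with constant residues is lost.
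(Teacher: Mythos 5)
Your construction coincides with the paper's own (very short) proof in the two cases that proof actually treats: away from $Sing(\mathcal F)\cap C$ the paper likewise takes local generators with an arbitrary fixed residue in $\mathbb C^*$ (your flow-box chart $dy/y$), and at the singular points it likewise invokes the Mattei--Moussu normal form through Example \ref{Exemplo Resíduo}; your patching remark is exactly the paper's observation, in the remark following Proposition \ref{P:Trivialidade}, that the residues transform by the transition cocycle of $N^*_{\mathcal F}\otimes\mathcal O_X(C)\vert_{C_i}$.

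What you add is the analysis of the third configuration, and you are right about it. At a reduced non degenerate singularity of $\mathcal F$ on the smooth locus of $C$, written in the coordinates of Theorem \ref{M-M} with $C=\{y=0\}$, every local generator of $N^*_{\mathcal F}\otimes\mathcal O_X(C)$ is a unit multiple of $\omega/y$, whose residue along $C$ is a unit times $\lambda_1x$: it vanishes at the singular point and cannot be made locally constant. The paper's proof does not cover this case either: applied at such a point, Example \ref{Exemplo Resíduo} yields a form with a pole along the transversal separatrix $S=\{x=0\}$, i.e.\ a generator of $N^*_{\mathcal F}\otimes\mathcal O_X(C+S)$ rather than of $N^*_{\mathcal F}\otimes\mathcal O_X(C)$. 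So Proposition \ref{P:Cociclo}, read literally, needs the additional hypothesis $Sing(\mathcal F)\cap C\subseteq Sing(C)$, exactly as you conclude. This costs the paper nothing: that hypothesis is present in every subsequent use (Propositions \ref{P:Trivialidade}, \ref{P:Constante Trivial}, \ref{P:Resíduo e Representação} and \ref{P:Resíduo e Representação Generalizados}), and when smooth-locus singularities do occur the paper passes to the enlarged divisor $D=C+Sep(\mathcal F,C)$ in Section \ref{chapter: ES}, which is the same repair you indicate. With that hypothesis made explicit, your proof and the paper's are the same; your restriction is not a gap in your argument but a correct delimitation of where the statement holds.
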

\begin{proof}
Away from the singularities $Sing(\mathcal F)\cap C$, we can choose local sections of the logarithmic conormal bundle with any fixed residue in $\mathbb C^*$. In  neighbourhoods of the singular points, local sections with constant residues are obtained via Example \ref{Exemplo Resíduo}.
\end{proof}

Remember the {\it intersection formula for normal bundle} (the index $Z$ below, seen as a divisor, is defined in \cite{Brunella}, pages 24 and 25):

\begin{thm}[{\bf Proposition 3}, \cite{Brunella}, page 25]\label{T:Normal Intersecta}
Let $\mathcal F$ be a foliation on a complex surface $X$ and let $C\subset X$ be a smooth compact $\mathcal F$-invariant curve. Then we have $N_{\mathcal F}\vert_C=\mathcal N_C\otimes \mathcal O_C(Z(\mathcal F, C))
$. If $D\subset X$ is any invariant connected curve (not necessarily smooth), then $N_{\mathcal F}\cdot D= D^2+Z(\mathcal F, D)$.
\end{thm}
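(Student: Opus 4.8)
The plan is to produce, on the invariant curve, an explicit comparison morphism between the two line bundles appearing in the statement and to identify its divisor of zeros with the index $Z(\mathcal F,\cdot)$; the intersection formula will then follow by taking degrees.

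I would treat the smooth compact case first. Recall that $\mathcal F$ is defined near $C$ through the inclusion of the conormal bundle $N^*_{\mathcal F}\hookrightarrow \Omega^1_X$ whose local generator is a $1$-form $\omega$ with isolated zeros. Restricting this sheaf morphism to $C$ gives $N^*_{\mathcal F}\vert_C\to \Omega^1_X\vert_C$, and composing with the canonical surjection $\Omega^1_X\vert_C\to \Omega^1_C$ (whose kernel is the conormal bundle $N^*_C$) yields a morphism $N^*_{\mathcal F}\vert_C\to \Omega^1_C$ which is identically zero, since the defining $1$-form of a foliation pulls back to $0$ on an invariant curve. Hence the restricted morphism factors through
\[
\psi\colon N^*_{\mathcal F}\vert_C\longrightarrow N^*_C .
\]
I would then compute $\psi$ in local coordinates $(x,y)$ with $C=\{y=0\}$: writing a generating vector field $v=a\,\partial_x+b\,\partial_y$, invariance forces $b=y\,\tilde b$, the dual form is $\omega=a\,dy-b\,dx$, and restricting to $C$ gives $\omega\vert_C=a(x,0)\,dy$. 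Since $dy$ generates $N^*_C$ along $C$, the map $\psi$ is multiplication by $a(x,0)$, the tangential component of $v$ along $C$. As $\mathcal F$ has isolated singularities, $a(x,0)$ is not identically zero, so $\psi$ is a nonzero morphism of line bundles whose divisor of zeros is precisely $\sum_p \ord_p\bigl(a(x,0)\bigr)\cdot p = Z(\mathcal F,C)$. Consequently $N^*_{\mathcal F}\vert_C\cong N^*_C\otimes\mathcal O_C(-Z(\mathcal F,C))$, and dualizing gives the first assertion; taking degrees yields $N_{\mathcal F}\cdot C = C^2+Z(\mathcal F,C)$.

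For a connected, possibly singular, invariant curve $D$, I would repeat the construction using the conormal line bundle of the Cartier divisor $D$, namely $N^*_D:=\mathcal O_X(-D)\vert_D$, obtaining a morphism $\psi\colon N^*_{\mathcal F}\vert_D\to N^*_D$ which agrees with the previous one at the smooth points of $D$ and is nonzero on every component (again by isolatedness of $\mathrm{Sing}(\mathcal F)$). Its cokernel is then a torsion sheaf of finite length $\sum_{p}Z(\mathcal F,D,p)=Z(\mathcal F,D)$, so comparing the degrees of the two line bundles on the compact curve $D$ gives $N_{\mathcal F}\cdot D=D^2+Z(\mathcal F,D)$.

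The main obstacle I anticipate is the analysis at the singular points of $D$: verifying that $\psi$ extends across the nodes of $D$ and that the length of its cokernel there is exactly the locally defined index $Z(\mathcal F,D,p)$, so that the global degree count is legitimate. Away from those points everything reduces to the clean line-bundle computation above; it is the bookkeeping of indices at the crossings of $D$ that requires care.
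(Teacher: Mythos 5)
The paper never proves this statement: it is quoted verbatim from Brunella's monograph (\textbf{Proposition 3} of \cite{Brunella}, page 25), so there is no internal argument to compare yours against, and your attempt must be judged on its own. Your treatment of the smooth case is correct and is the standard argument: invariance gives $i^*\omega=0$, so $N^*_{\mathcal F}\vert_C\to\Omega^1_X\vert_C$ factors through $N^*_C=\ker\bigl(\Omega^1_X\vert_C\to\Omega^1_C\bigr)$, in adapted coordinates the factored map is multiplication by the tangential coefficient $a(x,0)$, its zero divisor is $Z(\mathcal F,C)$, and dualizing plus taking degrees gives both claims for smooth $C$.

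The second half, however, contains a genuine gap, located exactly at the step you defer as ``bookkeeping at the crossings'': the morphism $\psi\colon N^*_{\mathcal F}\vert_D\to N^*_D=\mathcal O_X(-D)\vert_D$ does not exist in general, so there is nothing to verify. Take $D=\{xy=0\}$ invariant by $\omega=x\,dy-\lambda y\,dx$ with $\lambda\neq -1$ (a reduced non degenerate singularity, i.e.\ precisely the situation occurring in this paper). A factorization $\omega\vert_D=h\cdot d(xy)\vert_D$ with $h\in\mathcal O_D$ forces, from the $dy$- and $dx$-coefficients, $h=1-y\alpha$ and $h=-\lambda+x\beta$, hence $1+\lambda=x\beta+y\alpha$, impossible at the origin. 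Equivalently, the composition $N^*_{\mathcal F}\vert_D\to\Omega^1_X\vert_D\to\Omega^1_D$ is \emph{not} zero once $D$ is singular: the image of $\omega\vert_D$ is the nonzero torsion element $(1+\lambda)[x\,dy]$ of $\Omega^1_D$, so the factorization through the conormal sheaf is unavailable; concretely, your $\psi$ on $D-\{0\}$ equals $1$ on the branch $\{y=0\}$ and $-\lambda$ on the branch $\{x=0\}$, and a section of a line bundle on a nodal curve cannot jump between branches. The cokernel bookkeeping is also numerically untenable: at such a node $Z(\mathcal F,D,p)=0$ (check it on $\mathbb P^2$ with $D$ the union of the two axes: $N_{\mathcal F}\cdot D=6$, $D^2=4$, and the two nondegenerate singularities at infinity already contribute $1+1$), which in your scheme would make $\psi$ an isomorphism at $p$, whereas it is not even defined there; this also shows $Z(\mathcal F,D,p)$ is \emph{not} the sum of the indices of the smooth branches, so the local index at a crossing is not captured by any length of a cokernel on $D$. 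The repair is to compare degrees after pulling back to the normalization $n\colon\widetilde D\to D$ (equivalently, branch by branch): there the comparison map does extend holomorphically, one has $\deg n^*\bigl(N_{\mathcal F}\vert_D\bigr)=N_{\mathcal F}\cdot D$ and $\deg n^*\bigl(\mathcal O_X(D)\vert_D\bigr)=D^2$, and the total branchwise vanishing order is exactly what the index $Z(\mathcal F,D)$ of \cite{Brunella} measures at the singular points of $D$.
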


As a consequence of Theorem \ref{T:Normal Intersecta}, we have the following proposition:

\begin{prop}\label{P:Trivialidade}
If the reduced foliation $\mathcal F$ is without saddle-node in the invariant compact curve $C=\sum_{i=1}^n C_i$, where each $C_i$ is a smooth curve, and $Sing(\mathcal F)\cap C=Sing(C)=\bigcup_{i\neq j} C_i \cap C_j$, then $N^*_{\mathcal F}\otimes \mathcal O_X(C)\vert_{C_i}=\mathcal O_{C_i}$, $i=1,...,n$.
\end{prop}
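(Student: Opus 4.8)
The plan is to produce an explicit trivialization of $L=N^*_{\mathcal F}\otimes\mathcal O_X(C)$ on each $C_i$ by means of the residue morphism, rather than arguing only numerically. The first step is to observe that the residue along $C_i$ descends to a morphism of line bundles on $C_i$,
\[
Res_{C_i}\colon L\vert_{C_i}\longrightarrow \mathcal O_{C_i}.
\]
Indeed, the residue is $\mathbb C$-linear, and from the local expression $\eta\vert_V=\omega_0+\sum_k g_k\frac{df_k}{f_k}$ one reads off $Res_{C_i}(h\eta)=h\vert_{C_i}\cdot Res_{C_i}(\eta)$ for every holomorphic $h$; thus the $i$-th component of the residue map is $\mathcal O_X$-linear with $\mathcal O_X$ acting on the target through restriction, so it factors through $L\vert_{C_i}=L\otimes_{\mathcal O_X}\mathcal O_{C_i}$ and is $\mathcal O_{C_i}$-linear. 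Since a morphism of line bundles on the smooth curve $C_i$ is an isomorphism as soon as it is nowhere vanishing, it suffices to prove that $Res_{C_i}$ does not vanish at any point of $C_i$.

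Next I would reduce this nonvanishing to a pointwise check using Proposition \ref{P:Cociclo}. For $p\in C_i$ choose a logarithmic generator $\eta$ of $L$ near $p$ with locally constant residue; in the trivialization determined by $\eta$ the morphism $Res_{C_i}$ is simply multiplication by the constant $Res_{C_i}(\eta)$, so it is nonzero at $p$ precisely when that constant is nonzero. At a point $p\notin Sing(\mathcal F)$ the proof of Proposition \ref{P:Cociclo} lets us pick $\eta$ with any prescribed residue in $\mathbb C^*$, so nonvanishing is immediate there.

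The decisive step, where the hypotheses enter, is the behaviour at the singular points. By assumption $Sing(\mathcal F)\cap C_i$ consists exactly of the crossing points $C_i\cap C_j$ with $j\neq i$, and since $\mathcal F$ has no saddle-node these are reduced non degenerate singularities, so both eigenvalues $\lambda_1,\lambda_2$ are nonzero. Taking coordinates with $C_i=\{y=0\}$ and $C_j=\{x=0\}$, Example \ref{Exemplo Resíduo} furnishes a logarithmic generator $\eta$ of $L$ near the crossing with $Res_{C_i}(\eta)=\lambda_1\neq 0$. Hence $Res_{C_i}$ is nonzero at every crossing point as well, therefore nowhere vanishing on $C_i$, and so it is an isomorphism $L\vert_{C_i}\cong\mathcal O_{C_i}$.

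I expect the main obstacle to be conceptual rather than computational: upgrading a degree count to a genuine trivialization. A direct computation with Theorem \ref{T:Normal Intersecta} gives $\deg L\vert_{C_i}=\sum_{j\neq i}C_i\cdot C_j-Z(\mathcal F,C_i)=0$, once one knows that the tangency index at each non degenerate crossing equals $1$; but this only fixes the numerical class of $L\vert_{C_i}$. The work is precisely in showing that the residue morphism is an honest isomorphism, which is what promotes degree zero to actual triviality and is essential when $C_i$ is not rational. The no saddle-node hypothesis is what guarantees the residues survive (are nonzero) at the nodes, so it is indispensable at exactly this point.
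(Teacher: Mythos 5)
Your proof is correct, but it follows a genuinely different route from the paper's own proof of Proposition \ref{P:Trivialidade}. The paper argues via the first assertion of Theorem \ref{T:Normal Intersecta}, which for the smooth compact invariant curve $C_i$ is already an isomorphism of line bundles, $N_{\mathcal F}\vert_{C_i}=\mathcal N_{C_i}\otimes\mathcal O_{C_i}(Z(\mathcal F,C_i))$: since the crossings are reduced non degenerate singularities, the divisor $Z(\mathcal F,C_i)$ is the reduced sum of the points $\bigcup_{j\neq i}C_i\cap C_j$, whence $N_{\mathcal F}\vert_{C_i}=\mathcal N_{C_i}\otimes\mathcal O_{C_i}(\sum_{j\neq i}C_j\cap C_i)=\mathcal O_X(C)\vert_{C_i}$, which is the statement. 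In particular, your closing concern that an argument through Theorem \ref{T:Normal Intersecta} ``only fixes the numerical class'' misreads that theorem: the degree formula $N_{\mathcal F}\cdot D=D^2+Z(\mathcal F,D)$ is only its second assertion (for possibly singular invariant curves), while the first assertion is bundle-level, so the paper's route yields honest triviality even when $C_i$ is not rational. Your residue argument --- the morphism $Res_{C_i}\colon N^*_{\mathcal F}\otimes\mathcal O_X(C)\vert_{C_i}\rightarrow\mathcal O_{C_i}$ is $\mathcal O_{C_i}$-linear and nowhere vanishing, because the residue of a local generator is $1$ at regular points and equals a nonzero eigenvalue at the non degenerate crossings --- is essentially the alternative argument the paper itself sketches in the Remark immediately following the proposition, where triviality is deduced from the proof of Proposition \ref{P:Cociclo} by choosing local generators with one and the same residue along $C_i$. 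What your version buys is an explicit trivializing map (the residue itself), which is precisely the mechanism exploited later in Propositions \ref{P:Constante Trivial} and \ref{P:Resíduo e Representação}; what the paper's official proof buys is brevity, since Brunella's formula does all the local work at once.
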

\begin{proof}
The index $Z$ is zero at any point $p\in C_i-C\cap Sing(C)$ and equal to $1$ at the points in $C_i\cap Sing(C)$. Hence
\begin{eqnarray*}
N_{\mathcal F}\vert_{C_i}&=&\mathcal N_{C_i}\otimes\mathcal O_{C_i}(Z(\mathcal F, C_i))\nonumber\\
&=&\mathcal N_{C_i}\otimes\mathcal O_{C_i}(C_i\cap Sing(C))\nonumber\\
&=&\mathcal N_{C_i}\otimes\mathcal O_{C_i}(\cup_{j\neq i} C_j\cap C_i)\nonumber\\
&=&\mathcal O_X(C)\vert_{C_i}
\end{eqnarray*}
\end{proof}

\begin{remark} The above Proposition is also a consequence of (the proof of) Proposition \ref{P:Cociclo}. In fact, we note that the residues have co-cycle of transition equal to the co-cycle of $N^*_{\mathcal F}\otimes \mathcal O_X(C)\vert_{C_i}$. % (in neighbourhood of a singular point of $C$ we have two components for the co-cycle, each one with value given by the transitions of the correspondents residues). In particular, in each component $C_i$, the co-cycle of transition of the residues gives a co-cycle for $N^*_{\mathcal F}\otimes \mathcal O_X(C)\vert_{C_i}$.
We can choose local sections defining $N^*_{\mathcal F}\otimes \mathcal O_X(C)$ along a neighbourhood of $C_i$ with the same fixed residue in $\mathbb C^*$ (along $C_i$), thus we obtain the triviality of $N^*_{\mathcal F}\otimes \mathcal O_X(C)\vert_{C_i}$.  
\end{remark}

From our discussion above we also obtain

\begin{prop}\label{P:Constante Trivial}
Suppose that the reduced foliation $\mathcal F$ is without saddle-node at the invariant compact curve $C=\sum_{i=1}^n C_i$, where each $C_i$ is a smooth curve, and $Sing(\mathcal F)\cap C=Sing(C)=\bigcup_{i\neq j} C_i \cap C_j$. Then $N^*_{\mathcal F}\otimes \mathcal O_X(C)\vert_{C}=\mathcal O_{C}$ if, and only if, there are $\delta_1,...,\delta_n\in \mathbb C^*$, open sets $V_l$ whose union is a neighbourhood of $C$ in $X$ and sections $\eta_l$ of $N^*_{\mathcal F}\otimes \mathcal O_X(C)\vert_{V_l}$ such that
\[
V_l	\cap C_i\neq \emptyset\Rightarrow Res_{C_i}(\eta_l)=\delta_i
\]
$i=1,..., n$.
\end{prop}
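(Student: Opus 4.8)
The plan is to run both implications through a single observation: under the stated hypotheses the residue realizes a \emph{canonical} trivialization of $L:=N^*_{\mathcal F}\otimes\mathcal O_X(C)$ on each component. By Proposition~\ref{P:Trivialidade} we already have $L\vert_{C_i}\cong\mathcal O_{C_i}$, and the Remark following it (together with Proposition~\ref{P:Cociclo}) shows this isomorphism is induced by the residue map $Res_{C_i}\colon L\vert_{C_i}\to\mathcal O_{C_i}$, which is therefore an isomorphism carrying a generator to a nowhere-vanishing function. Since each $C_i$ is compact and connected, $H^0(C_i,\mathcal O_{C_i})=\mathbb C$, so the residue along $C_i$ of any global section of $L\vert_{C_i}$ is constant, and this constant is nonzero exactly when the section is nowhere vanishing. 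As a preliminary I would also record that $Res_{C_i}$ is $\mathcal O_X$-linear with values in $\mathcal O_{C_i}$, hence factors through $L\vert_{C_i}$; thus the residue of a local section of $L$ along $C_i$ depends only on its restriction to $C_i$.

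For $(\Leftarrow)$, suppose the $\eta_l$ and $\delta_i\in\mathbb C^*$ are given. Since $\delta_i\neq0$, and the residue is the image under the isomorphism $L\vert_{C_i}\cong\mathcal O_{C_i}$, each restriction $\eta_l\vert_{C}$ is nonzero at every point of every component it meets, so it is a nowhere-vanishing section of $L\vert_C$ over $C\cap V_l$. On an overlap one then has $\eta_l\vert_C=h_{ll'}\,\eta_{l'}\vert_C$ for a nowhere-vanishing function $h_{ll'}$ on $C\cap V_l\cap V_{l'}$. Comparing residues along each $C_i$ meeting the overlap gives $\delta_i=h_{ll'}\vert_{C_i}\,\delta_i$, whence $h_{ll'}\vert_{C_i}=1$; as $C\cap V_l\cap V_{l'}$ is the union of the pieces $C_i\cap V_l\cap V_{l'}$, we conclude $h_{ll'}\equiv1$. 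Therefore the $\eta_l\vert_C$ agree on overlaps and glue to a global nowhere-vanishing section of $L\vert_C$, proving $L\vert_C\cong\mathcal O_C$.

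For $(\Rightarrow)$, suppose $L\vert_C\cong\mathcal O_C$ and fix a global nowhere-vanishing section $s$. I would set $\delta_i:=Res_{C_i}(s\vert_{C_i})$; by the preliminary remark this is a well-defined constant, and it lies in $\mathbb C^*$ because $s\vert_{C_i}$ is a generator of $L\vert_{C_i}$. To produce the $\eta_l$, cover a neighbourhood of $C$ by Stein opens $V_l$ on which $L$ is trivial; since $C\cap V_l$ is a hypersurface in the Stein open $V_l$, the section $s\vert_{C\cap V_l}$ extends to a section $\eta_l$ of $L\vert_{V_l}$ with $\eta_l\vert_{C\cap V_l}=s\vert_{C\cap V_l}$ (Cartan's Theorem~B, applied to the coherent ideal sheaf of $C$). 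Because $Res_{C_i}(\eta_l)$ depends only on $\eta_l\vert_{C_i}=s\vert_{C_i}$, we obtain $Res_{C_i}(\eta_l)=\delta_i$ whenever $V_l\cap C_i\neq\emptyset$, as required.

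The step that needs the most care is the preliminary identification underlying both directions: verifying that the residue genuinely descends to an isomorphism $L\vert_{C_i}\cong\mathcal O_{C_i}$ sending generators to \emph{nonzero} constants, rather than merely asserting the existence of some trivialization. This is precisely where the hypotheses ``reduced'' and ``no saddle-node'' are used, through Example~\ref{Exemplo Resíduo} at the nodes (where the residues are the nonzero eigenvalue ratios) and Proposition~\ref{P:Cociclo} away from them. Once this matching of the residue with the trivializing datum is secured, both implications reduce to the cocycle computation above and to a routine local extension.
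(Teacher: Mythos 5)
Your proof is correct, and its backward implication is essentially the paper's: comparing the constant residues forces the transition data of $N^*_{\mathcal F}\otimes\mathcal O_X(C)\vert_{C}$ to be identically $1$ on $C$ (you phrase this as gluing the restrictions $\eta_l\vert_C$ into a global nowhere-vanishing section, the paper as the residue cocycle being constant equal to $1$). The forward implication, however, takes a genuinely different route. The paper starts from the local sections of Proposition \ref{P:Cociclo} (normal forms of Theorem \ref{M-M} at the nodes), whose residues are locally constant, and rescales them by constants so that the residue cocycle becomes $1$; as written, this normalization is terse --- it tacitly uses that a locally constant cocycle which is trivial in ${\rm Pic}(C)$ can be trivialized by constants, which requires either refining the cover so that each $C\cap V_l$ is connected, or the graph-cohomology identification of Proposition \ref{P:Fibrado em Curva}. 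You instead fix a global nowhere-vanishing section $s$ of the restricted bundle, read off $\delta_i$ as its residues --- automatically constant by compactness of $C_i$, and nonzero because the residue induces an isomorphism $N^*_{\mathcal F}\otimes\mathcal O_X(C)\vert_{C_i}\cong\mathcal O_{C_i}$ --- and then extend $s$ over a Stein cover by Cartan's Theorem B. Your preliminary observation that the residue genuinely realizes this isomorphism component by component (only hinted at in the Remark after Proposition \ref{P:Trivialidade}) is exactly what makes both directions clean, and your forward direction avoids the paper's implicit normalization step altogether; the price is importing Stein theory, which the paper's argument --- deliberately kept within local normal forms and residue cocycles, the language reused later for Proposition \ref{P:Resíduo e Representação} --- does not need.
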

\begin{proof}
Suppose that $N^*_{\mathcal F}\otimes \mathcal O_X(C)\vert_{C}=\mathcal O_{C}$. Taking the product of suitable constants by the local sections $\eta_l$ given by 	Proposition \ref{P:Cociclo}, we can assume that the co-cycle of transition of residues is constant equal to $1$. Hence, in each component $C_i$, we have
$$
V_l	\cap C_i\neq \emptyset\Rightarrow Res_{C_i}(\eta_l)=\delta_i
$$
for constants $\delta_i\in \mathbb C^*$, $i=1,...,n$.

Reciprocally, suppose that there exist local sections $\eta_l$ as above. Then clearly the co-cycle of transition of the residues is constant equal to $1$ and hence $N^*_{\mathcal F}\otimes \mathcal O_X(C)\vert_{C}$ is trivial.  
\end{proof}

%\begin{figure}[!htb]
%\centering
%\includegraphics[scale=0.3]{res.jpg}
%\caption{Quocient of residues at $p$.}
%\end{figure}

\begin{remark}\label{O:Quociente de Resíduos} Let $p\in C_1\cap C_2$, where $C_1$ and $C_2$ are smooth curves. If $C=C_1 \cup C_2$ is invariant by a reduced foliation $\mathcal F$ with reduced non degenerate singularity at $p$ and $\eta$ is a local section on neighbourhood of $p$ defining $N^*_{\mathcal F}\otimes \mathcal O_X(C)$, then the quotient
$$Res_{C_1}\eta / Res_{C_2}\eta$$
is independent of $\eta$ and equal to minus the quotient of eigenvalues at $p$. This is a consequence of Theorem \ref{M-M} (see Example \ref{Exemplo Resíduo}).
\end{remark}

\subsection{Residual Representation}\label{S:Representação Residual}

Let $\mathcal F$ be a holomorphic foliation on the smooth complex surface $X$. Suppose that $\mathcal F$ leaves invariant a curve $C=\sum_{i=1}^n C_i$ with the following properties:
\begin{enumerate}
\item $C_1,..., C_n$ are smooth curves;
\item  every singularity of $\mathcal F$ in $C$ is reduced;
\item $Sing(C)=\bigcup_{i\neq j} C_i \cap C_j$ are reduced non degenerate singularities of $\mathcal F$.
\end{enumerate}

To simplify the exposition, we assume $\#C_i\cap C_j\leq 1$ if $i\neq j$ (there is no loss of generality, because we can do this situation after some blow-ups). If $i\neq j$ and $C_i\cap C_j\neq \emptyset$, take $\{p_{ij}\}=\{p_{ji}\}=C_i\cap C_j$.

Associated to the $\mathcal F$-invariant divisor $C$ we have his dual graph $\Gamma$, where each vertex correspond to an irreducible component $C_i$ and each edge linking two vertex correspond to the intersection of them at the point $p_{ij}$. The graph is {\it directed}, with ordering given by the ordering index from the vertices, that is, the edge $p_{ij}$ linking the vertex $C_i$ to the vertex $C_j$ has the orientation from $C_i$ to $C_j$ if $i<j$. (We make an arbitrary choice of orientation, but this is innocuous.)

To each edge $p_{ij}$, with $i<j$, we associate the complex number $\delta_{ij}=-CS(\mathcal F, C_j,p_{ij})\in \mathbb C^*$. (Note that the foliation is reduced non degenerate at $p_{ij}$, hence
$$\delta_{ij}=-CS(\mathcal F, C_j,p_{ij})=-\alpha_{ij} / \beta_{ij}=Res_{C_i}/Res_{C_j}$$
where $\alpha_{ij} $, $\beta_{ij}$ are eigenvalues at $p_{ij}$ associated to a generating vector field of the foliation.)

Thus we obtain a co-homology class
$\sigma=\sigma_{(\mathcal F,C)}\in H^1(\Gamma,\mathbb C^*)$ or (equivalently) a representation $\rho=\rho_{(\mathcal F,C)}: \pi_1(\Gamma)\rightarrow \mathbb C^*$ to be called {\it residual representation}.

In some cases, the representation just presented will correspond to the obstruction for triviality of the line bundle $N^*_{\mathcal F}\otimes \mathcal O_X(C)\vert_{C}$. More precisely, we have 

\begin{prop}\label{P:Resíduo e Representação}
Suppose that the reduced foliation $\mathcal F$ is without saddle-node in the invariant compact curve $C=\sum_{i=1}^n C_i$, where each $C_i$ is a smooth curve, and $Sing(\mathcal F)\cap C=Sing(C)=\bigcup_{i\neq j} C_i \cap C_j$. Then $N^*_{\mathcal F}\otimes \mathcal O_X(C)\vert_{C}=\mathcal O_{C}$ if, and only if, the residual representation $\rho_{(\mathcal F,C)}$ is trivial.
\end{prop}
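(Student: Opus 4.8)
The plan is to combine the local-to-global criterion of Proposition \ref{P:Constante Trivial} with the elementary fact that a $\mathbb C^*$-valued representation of $\pi_1(\Gamma)$ is trivial exactly when its associated edge-cocycle is a coboundary. Throughout I write $L:=N^*_{\mathcal F}\otimes \mathcal O_X(C)$.

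First I would record the purely combinatorial reformulation of triviality of $\rho_{(\mathcal F,C)}$. Since $\mathbb C^*$ is abelian and $\pi_1(\Gamma)$ is free, the representation is classified by the class $\sigma\in H^1(\Gamma,\mathbb C^*)$ of the $1$-cochain sending the oriented edge $p_{ij}$ (with $i<j$) to $\delta_{ij}$. The coboundary of a $0$-cochain $(\delta_i)_{i=1}^n\in(\mathbb C^*)^n$ on the vertices is the edge-cochain $p_{ij}\mapsto \delta_i/\delta_j$, so $\rho_{(\mathcal F,C)}$ is trivial if and only if there exist $\delta_1,\dots,\delta_n\in\mathbb C^*$ with
\[
\delta_{ij}=\delta_i/\delta_j\qquad\text{for every edge }p_{ij}\ (i<j).
\]
Equivalently, the product of the $\delta_{ij}^{\pm1}$ around every cycle of $\Gamma$ is $1$; one recovers the $\delta_i$ by fixing a value on a spanning tree and propagating along it, the cycle conditions guaranteeing consistency on the remaining edges.

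Next I would translate this into the language of residues. By Remark \ref{O:Quociente de Resíduos}, at each node $p_{ij}$ every local section $\eta$ of $L$ satisfies $Res_{C_i}(\eta)/Res_{C_j}(\eta)=\delta_{ij}$. Hence a family of constants $(\delta_i)$ fulfilling the coboundary relations above is precisely a family that can be realized simultaneously as residues: near $p_{ij}$ I rescale a section $\eta$ by $c=\delta_i/Res_{C_i}(\eta)$, obtaining $Res_{C_i}(c\eta)=\delta_i$ and then automatically $Res_{C_j}(c\eta)=c\,Res_{C_i}(\eta)/\delta_{ij}=\delta_i/\delta_{ij}=\delta_j$. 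With this, both implications follow from Proposition \ref{P:Constante Trivial}. For the forward direction, $L\vert_C=\mathcal O_C$ yields constants $\delta_i$ and sections $\eta_l$ with $Res_{C_i}(\eta_l)=\delta_i$; reading the quotient at a node and applying Remark \ref{O:Quociente de Resíduos} gives $\delta_i/\delta_j=\delta_{ij}$, so $\rho_{(\mathcal F,C)}$ is trivial. For the converse, triviality of $\rho_{(\mathcal F,C)}$ provides such $\delta_i$; starting from the local logarithmic forms with locally constant residues of Proposition \ref{P:Cociclo} and rescaling each by the scalar above, I obtain sections satisfying the hypotheses of Proposition \ref{P:Constante Trivial}, whence $L\vert_C=\mathcal O_C$.

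The hard part will be the converse construction, namely producing sections whose residues equal the prescribed $\delta_i$ on every component at once and are compatible at the double points. This is exactly where the relation $\delta_{ij}=\delta_i/\delta_j$ enters: it is the precise condition ensuring that a single scalar adjustment of a local section corrects both residues at a node simultaneously. All the analytic input is thereby confined to Propositions \ref{P:Cociclo} and \ref{P:Constante Trivial}, leaving only the cohomological bookkeeping on the dual graph $\Gamma$.
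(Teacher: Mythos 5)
Your proof is correct and follows essentially the same route as the paper: the forward implication combines Proposition \ref{P:Constante Trivial} with Remark \ref{O:Quociente de Resíduos}, and the converse uses the coboundary data $\delta_{ij}=\delta_i/\delta_j$ to rescale the local sections of Proposition \ref{P:Cociclo} so that their residues are the prescribed constants $\delta_i$, then invokes Proposition \ref{P:Constante Trivial} again. Your write-up merely makes explicit the cohomological bookkeeping (coboundary condition, spanning-tree propagation) and the rescaling computation at the nodes, which the paper leaves implicit.
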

\begin{proof} If $N^*_{\mathcal F}\otimes \mathcal O_X(C)\vert_{C}=\mathcal O_{C}$, then the above Proposition (\ref{P:Constante Trivial}) plus the Remark (\ref{O:Quociente de Resíduos}) imply that the representation $\rho_{(\mathcal F, C)}$ is trivial.

Reciprocally, suppose the triviality of the residual representation. Then there exist $\delta_1,...,\delta_n\in \mathbb C^*$ such that
$$\delta_i/\delta_j=\delta_{ij}=Res_{C_i}/Res_{C_j}.$$
Thus, it is possible to choose local sections of the logarithmic co-normal bundle in neighbourhoods of the singular points of $C$ with residue $\delta_i$ through $C_i$, $i=1,...,n$ (Remark \ref{O:Quociente de Resíduos}). Away from the singularities of $C$, we can choose any residue in $\mathbb C^*$, then we take local sections with residue $\delta_i$ along $C_i$. Hence, such local sections give the trivialization of the logarithmic co-normal bundle on $C$ (by residues).
\end{proof}

\begin{example}
If the dual graph $\Gamma$ of $C$ is a {\it tree}, then the graph is {\it contractible} (that is, $\Gamma$ has the homotopy type of a point), then obviously the representation $\rho_{(\mathcal F,C)}$ is trivial.
\end{example}

\begin{definition}
A {\it cicly of smooth rational curves} (or simply a {\it cicly}) is a union of a finite number of smooth rational curves in general position (normal crossings) $C_i$, $i=1,..., m$, $m>1$, such that:
 if $m=2$, then $\# C_1 \cap C_2=2$;
 if $m>2$, then $\# C_i \cap C_{(i+1)}=\# C_1 \cap C_m=1$, $i=1,..., m-1$, otherwise $\# C_i \cap C_j=0$.
\end{definition}

\begin{example}
Let $\mathcal F$ be a foliation tangent to a cycle of smooth rational curves $C=C_1+C_2+C_3$, where $C_j^2=1$, $j=1, 2, 3$, and such that $Sing(\mathcal F)\cap C=Sing(C)$ are reduced non degenerate singularities of $\mathcal F$. (For example, consider the foliation given by $\omega=xdy-\lambda ydx$ on $\mathbb P^2$, where $\lambda \in \mathbb C-\mathbb Q_{\geq 0}$.)

%\begin{figure}[!htb]
%\centering
%\includegraphics[scale=0.3]{ciclo_tablet.png}
%\caption{Invariant cycle of three smooth rational curves.}
%\end{figure}

The dual graph of $C$ is
$$\xymatrix@R5pt@C7pt{
 & \ar[ldd]_{\delta_{1,2}} \bullet_{C_1} \ar[rdd]^{\delta_{1,3}}  &  \\ 
 &  & \\ 
\bullet_{C_2} \ar[rr]_{\delta_{2,3}}&  & \bullet_{C_3}
} $$
where the numbers $\delta_{ij}$ give the correspondent co-cycle in $H^1(\Gamma, \mathbb C^*)$ from the residual representation. Thus $\delta_{1,2}+\delta_{1,3}=\frac{1}{\delta_{1,2}}+\delta_{2,3}=\frac{1}{\delta_{2,3}}+\frac{1}{\delta_{1,3}}=-1$.
We can see easily that in this case the representation $\rho_{(\mathcal F, C)}$ is always trivial. In fact, $\delta_{1,2} \delta_{2,3}\delta_{3,1}=\delta_{1,2} (-\frac{1+\delta_{1,2}}{\delta_{1,2}})(-\frac{1}{1+\delta_{1,2}})=1$.
\end{example}

\subsection{The Picard Group of a Curve}\label{S:Grupo de Picard de Curvas}

Let $C=\sum_{i=1}^n C_i$ be a compact complex curve (connected!), where each $C_i$ is a smooth curve, and $\Gamma$ be the dual graph of $C$. We will describe the line bundles over $C$ with the property of triviality over each irreducible component $C_i$, $i=1,..., n$.

\begin{prop}\label{P:Fibrado em Curva}
Let $T(C)=\{F\in {\rm Pic(C)}; F\vert_{C_i}=\mathcal O_{C_i}, i=1,...,n\}$. Then
\[
T(C)\simeq H^1(\Gamma,\mathbb C^*)\simeq \Hom(\pi_1(\Gamma),\mathbb C^*).
\]
\end{prop}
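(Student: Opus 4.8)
The plan is to identify $T(C)$ with the kernel of the restriction (pullback) map $\Pic(C)\to\bigoplus_{i}\Pic(C_i)$ and to compute that kernel via the normalization of $C$. Let $\nu\colon \tilde C=\bigsqcup_{i=1}^n C_i\to C$ be the normalization. Since $\nu|_{C_i}$ is the inclusion $C_i\hookrightarrow C$, we have $\nu^*F|_{C_i}=F|_{C_i}$ and $\Pic(\tilde C)=\bigoplus_i\Pic(C_i)$, so by definition
\[
T(C)=\ker\!\big(\nu^*\colon \Pic(C)\to \textstyle\bigoplus_i \Pic(C_i)\big).
\]
Thus everything reduces to understanding the first terms of the long exact sequence attached to a comparison of units on $C$ and on $\tilde C$.

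Because every $p_{ij}\in\operatorname{Sing}(C)$ is an ordinary node and $\#(C_i\cap C_j)\le 1$, comparing $\mathcal O_C^*$ with $\nu_*\mathcal O_{\tilde C}^*$ yields a short exact sequence of sheaves on $C$,
\[
0\to \mathcal O_C^*\to \nu_*\mathcal O_{\tilde C}^*\to \mathcal Q\to 0,
\]
where $\mathcal Q$ is a skyscraper supported on $\operatorname{Sing}(C)$ with stalk $\mathbb C^*$ at each $p_{ij}$: a pair of units $(u_i,u_j)$ on the two branches maps to the ratio $u_i(p_{ij})/u_j(p_{ij})$, whose kernel is exactly the units on $C$ (the pairs agreeing at the node). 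Taking cohomology, and using that $\nu$ is finite so that Leray gives $H^q(C,\nu_*\mathcal O_{\tilde C}^*)=H^q(\tilde C,\mathcal O_{\tilde C}^*)$, the relevant segment
\[
H^0(\tilde C,\mathcal O_{\tilde C}^*)\xrightarrow{\,d\,}H^0(C,\mathcal Q)\to \Pic(C)\xrightarrow{\ \nu^*\ } \bigoplus_i \Pic(C_i)
\]
identifies $T(C)=\ker(\nu^*)$ with $\operatorname{coker}(d)$.

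It remains to compute the two outer terms. Each $C_i$ is smooth, compact and connected, so $H^0(C_i,\mathcal O_{C_i}^*)=\mathbb C^*$ and hence $H^0(\tilde C,\mathcal O_{\tilde C}^*)=(\mathbb C^*)^V$, where $V$ is the vertex set of $\Gamma$; and since $\mathcal Q$ is a skyscraper on the nodes, $H^0(C,\mathcal Q)=\prod_{p_{ij}}\mathbb C^*=(\mathbb C^*)^E$, where $E$ is the edge set of $\Gamma$. Tracing the definitions, $d$ sends a tuple of constants $(u_i)_i$ to the collection $(u_i/u_j)$ indexed by the edges $p_{ij}$ with $i<j$ — that is, precisely the simplicial coboundary $C^0(\Gamma,\mathbb C^*)\to C^1(\Gamma,\mathbb C^*)$ of the dual graph with multiplicative coefficients (the chosen edge orientation only reindexes). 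As $\Gamma$ has no $2$-cells, $H^1(\Gamma,\mathbb C^*)=\operatorname{coker}(d)$, whence $T(C)\cong H^1(\Gamma,\mathbb C^*)$. Finally, $\Gamma$ is connected, so $H_0(\Gamma,\mathbb Z)=\mathbb Z$ is free; universal coefficients give $H^1(\Gamma,\mathbb C^*)\cong \Hom(H_1(\Gamma,\mathbb Z),\mathbb C^*)$, and since $\mathbb C^*$ is abelian and $H_1(\Gamma,\mathbb Z)=\pi_1(\Gamma)^{\mathrm{ab}}$, this is $\Hom(\pi_1(\Gamma),\mathbb C^*)$, as claimed.

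The step requiring the most care is the middle identification: checking that the connecting data of the normalization sequence really reproduces the simplicial coboundary of $\Gamma$ (the orientation bookkeeping and the evaluation-at-the-node), together with the input $H^0(C_i,\mathcal O_{C_i}^*)=\mathbb C^*$ coming from compactness of the components. One can equivalently avoid the sheaf sequence by a direct \v{C}ech computation on a cover by tubular neighbourhoods of the $C_i$, whose pairwise overlaps are exactly the nodes; there a line bundle trivial on each $C_i$ is visibly described by one constant in $\mathbb C^*$ per edge modulo coboundaries of constants along the vertices, making the appearance of $H^1(\Gamma,\mathbb C^*)$ transparent. This amounts to the same computation.
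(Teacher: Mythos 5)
Your proof is correct and is essentially the paper's own argument: the sheaf $\nu_*\mathcal O_{\tilde C}^*$ is exactly the $\bigoplus_{i=1}^n\mathcal O_{C_i}^*$ appearing in the paper's short exact sequence $1\to\mathcal O_C^*\to\bigoplus_i\mathcal O_{C_i}^*\to\bigoplus_{p\in\operatorname{Sing}(C)}\mathbb C^*\to 1$, and both proofs extract $T(C)\simeq H^1(\Gamma,\mathbb C^*)$ from the resulting long exact sequence by identifying the connecting data with the coboundary of the dual graph. Your additional details (Leray for the finite normalization, the universal-coefficients step for $H^1(\Gamma,\mathbb C^*)\simeq\Hom(\pi_1(\Gamma),\mathbb C^*)$) merely make explicit what the paper asserts.
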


\begin{proof}
Consider the short exact sequence
\[
1\rightarrow \mathcal O_C^*\rightarrow \bigoplus_{i=1}^n \mathcal O_{C_i}^*\rightarrow \bigoplus_{p\in Sing(C)}  \mathbb C^*\rightarrow 1
\]
where the morphism with image in $\displaystyle\bigoplus_{p\in Sing(C)}\mathbb C^*$ is given by quotient (to do this we use the index order): if $k<l$ and $q\in C_k\cap C_l$, a local section of $\bigoplus_{i=1}^n \mathcal O_{C_i}^*$ with value $\lambda_k$ at $q\in C_k$ and $\lambda_l$ at $q\in C_l$ has image with value $\lambda_k/\lambda_l$ at $q$ (as a section of $\displaystyle\bigoplus_{p\in Sing(C)}\mathbb C^*$). Hence, we have a long exact sequence in co-homology
\[
1\rightarrow \mathbb C^*\rightarrow \displaystyle\bigoplus_{i=1}^n \mathbb C^*\rightarrow \displaystyle\bigoplus_{p\in Sing(C)} \mathbb C^*\rightarrow {\rm Pic}(C)\rightarrow \displaystyle\bigoplus_{i=1}^n {\rm Pic}(C_i)\rightarrow 1.
\]
Therefore
\begin{eqnarray*}
\Hom(\pi_1(\Gamma),\mathbb C^*)&\simeq &H^1(\Gamma,\mathbb C^*)\nonumber\\
&\simeq &\frac{\bigoplus_{p\in Sing(C)} \mathbb C^*}{{\rm Im}(\bigoplus_{i=1}^n \mathbb C^*\rightarrow \bigoplus_{p\in Sing(C)} \mathbb C^*)}\nonumber\\
&\simeq &\frac{\bigoplus_{p\in Sing(C)} \mathbb C^*}{{\rm Ker}(\bigoplus_{p\in Sing(C)}\mathbb C^*\rightarrow {\rm Pic}(C))}\nonumber\\
&\simeq &{\rm Im}(\bigoplus_{p\in Sing(C)} \mathbb C^*\rightarrow {\rm Pic}(C))\nonumber\\
&\simeq &{\rm Ker}({\rm Pic}(C)\rightarrow \bigoplus_{i=1}^n {\rm Pic}(C_i))\nonumber\\
&= &T(C)
\end{eqnarray*}
\end{proof}

Thus, a line bundle $F\in {\rm Ker}({\rm Pic(C)}\rightarrow\bigoplus_{i=1}^n {\rm Pic}(C_i))={\rm Im}(\bigoplus_{p\in Sing(C)} \mathbb C^*\rightarrow {\rm Pic}(C))=T(C)$ determines an element in $H^1(\Gamma,\mathbb C^*)$ (hence a representation $\rho_F: \pi_1(\Gamma)\rightarrow \mathbb C^*$) and reciprocally an element in $H^1(\Gamma,\mathbb C^*)$ determines a line bundle in $T(C)$.

The interesting situation for us consists of a curve $C=\sum_{i=1}^n C_i$, with simple normal crossing singularities, invariant by a reduced foliation $\mathcal F$ without saddle-node and singular only at the crossing points of the curve, and the logarithmic co-normal bundle $L=N^*_{\mathcal F}\otimes \mathcal O_X(C)$ in restriction to the curve $C$, which result $F=L\vert_{C}\in {\rm Ker}( {\rm Pic}(C)\rightarrow \bigoplus_{i=1}^n {\rm Pic}(C_i))$ by Proposition \ref{P:Trivialidade}.

%At the light of what we do until now, we can write the commutative diagram bellow.

%$$\xymatrix{
%&&1\ar@{->}[r]&H^1(\Gamma, \mathbb C^*)\ar@{->}[r]&{\rm Pic}(C)\ar@{->}[r]\ar@{=}[d]&\displaystyle\bigoplus_{i=1}^n {\rm Pic}(C_i)\ar@{->}[r]\ar@{=}[d]&1\\
%1\ar@{->}[r]&\mathbb C^*\ar@{->}[r] \ar@{->}[d]&\displaystyle\bigoplus_{i=1}^n \mathbb C^*\ar@{->}[r]\ar@{->}[d]&\displaystyle\bigoplus_{p\in Sing(C)} \mathbb C^*\ar@{->}[r]\ar@{=}[d]&{\rm Pic}(C)\ar@{->}[r]&\displaystyle\bigoplus_{i=1}^n {\rm Pic}(C_i)\ar@{->}[r]&1\\
%1\ar@{->}[r]&\mathbb C_{C}^*\ar@{->}[r]\ar@{->}[d] &\displaystyle\bigoplus_{i=1}^n \mathbb C_{C_i}^*\ar@{->}[r]\ar@{->}[d]&\displaystyle\bigoplus_{p\in Sing(C)}\mathbb C^*\ar@{->}[r]\ar@{=}[d]&1\\
%1\ar@{->}[r]&\mathcal O_{C}^*\ar@{->}[r] &\displaystyle\bigoplus_{i=1}^n \mathcal O_{C_i}^*\ar@{->}[r]\ar@{->}[d]&\displaystyle\bigoplus_{p\in Sing(C)}\mathbb C^*\ar@{->}[r]&1\\
%&L\ar@{->}[r]^-{Res} &\displaystyle\bigoplus_{i=1}^n \mathcal O_{C_i}
%}$$

Finally, we can generalize the Proposition \ref{P:Resíduo e Representação}

\begin{prop}\label{P:Resíduo e Representação Generalizados}
Suppose that the reduced foliation $\mathcal F$ is without saddle-node in the invariant compact curve $C=\sum_{i=1}^n C_i$, where each $C_i$ is a smooth curve, and $ Sing(\mathcal F)\cap C=Sing(C)=\bigcup_{i\neq j} C_i \cap C_j$. Then the line bundle $F=N^*_{\mathcal F}\otimes \mathcal O_X(C)\vert_{C}$ is determined by the residual representation $\rho_{(\mathcal F,C)}$. That is, $\rho_F=\rho_{(\mathcal F, C)}$.
\end{prop}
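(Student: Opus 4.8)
The plan is to make the abstract isomorphism $T(C)\simeq H^1(\Gamma,\mathbb C^*)$ of Proposition \ref{P:Fibrado em Curva} completely explicit for the particular bundle $F=N^*_{\mathcal F}\otimes\mathcal O_X(C)\vert_C$, and then to check that the cocycle on $\Gamma$ it produces is exactly the one whose edge weights are the $\delta_{ij}$. By Proposition \ref{P:Trivialidade} the standing hypotheses already guarantee $F\vert_{C_i}=\mathcal O_{C_i}$ for every $i$, so $F$ genuinely lies in $T(C)$ and therefore does determine a class $\rho_F\in\Hom(\pi_1(\Gamma),\mathbb C^*)$; the content of the statement is only the identification of this class with $\rho_{(\mathcal F,C)}$, so this proposition is nothing but the promised generalization of Proposition \ref{P:Resíduo e Representação}.

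First I would unwind, from the connecting-homomorphism description in the proof of Proposition \ref{P:Fibrado em Curva}, how one reads $\rho_F$ off from $F$: choose a trivializing section of $F$ over each component $C_i$, and at each node $p_{ij}$ (with $i<j$) record the transition constant between the $C_i$-trivialization and the $C_j$-trivialization; this assignment of a constant in $\mathbb C^*$ to each edge of $\Gamma$, taken modulo the image of $\bigoplus_i\mathbb C^*$, is precisely the image of $F$ in the cokernel $\bigoplus_{p}\mathbb C^*\big/\mathrm{Im}(\bigoplus_i\mathbb C^*)\simeq H^1(\Gamma,\mathbb C^*)$.

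Next I would produce those trivializations of $F\vert_{C_i}$ concretely from logarithmic $1$-forms rather than abstractly. By Proposition \ref{P:Cociclo} one can cover a neighbourhood of $C$ by open sets carrying logarithmic sections $\eta_l$ of $L=N^*_{\mathcal F}\otimes\mathcal O_X(C)$ with locally constant residues, and along each $C_i$ the map $\eta\mapsto \mathrm{Res}_{C_i}(\eta)$ realizes the isomorphism $L\vert_{C_i}\simeq\mathcal O_{C_i}$ (this is the content of Proposition \ref{P:Trivialidade} together with the Remark following it, which already observes that the transition cocycle of $L\vert_{C_i}$ is the residue cocycle). Normalizing so that $\mathrm{Res}_{C_i}\equiv 1$ along $C_i$ gives a trivializing section $s_i$ of $F\vert_{C_i}$. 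The only remaining computation is the transition at a node $p_{ij}$: taking a single local logarithmic section $\eta$ near $p_{ij}$, both residues $\mathrm{Res}_{C_i}(\eta)$ and $\mathrm{Res}_{C_j}(\eta)$ are visible along the two branches, and by Remark \ref{O:Quociente de Resíduos} their ratio is the fixed constant $\mathrm{Res}_{C_i}/\mathrm{Res}_{C_j}=\delta_{ij}$, independent of $\eta$. Comparing $s_i$ and $s_j$ inside the single line $F_{p_{ij}}$ through this $\eta$ then yields transition constant $\delta_{ij}$ (up to the chosen orientation of the edge), so the cocycle attached to $F$ is exactly $(\delta_{ij})_{i<j}$.

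Finally I would invoke the definition of the residual representation itself: its value on the edge $p_{ij}$ is by construction the same $\delta_{ij}=-CS(\mathcal F,C_j,p_{ij})=\mathrm{Res}_{C_i}/\mathrm{Res}_{C_j}$. Since the two cocycles on $\Gamma$ agree edge by edge, they define the same class in $H^1(\Gamma,\mathbb C^*)$, whence $\rho_F=\rho_{(\mathcal F,C)}$, and Proposition \ref{P:Resíduo e Representação} is recovered as the special case where this common class is trivial. The main obstacle I anticipate is purely bookkeeping: one must fix the edge orientations of $\Gamma$ and the numerator/denominator convention consistently in both the connecting homomorphism of Proposition \ref{P:Fibrado em Curva} and the definition of $\delta_{ij}$, so that the transition constants come out equal to $\delta_{ij}$ on the nose rather than to $\delta_{ij}^{\pm1}$. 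Once the conventions are aligned the equality of cohomology classes, and hence of representations, is immediate.
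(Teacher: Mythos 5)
Your proposal is correct and follows essentially the same route as the paper: membership of $F$ in $T(C)$ via Proposition \ref{P:Trivialidade}, the identification $T(C)\simeq H^1(\Gamma,\mathbb C^*)$ from Proposition \ref{P:Fibrado em Curva}, and the edge-by-edge residue computation via Proposition \ref{P:Cociclo} and Remark \ref{O:Quociente de Resíduos}. The paper's own proof consists only of the observation that $F\in\mathrm{Ker}\bigl(\mathrm{Pic}(C)\rightarrow\bigoplus_{i=1}^n\mathrm{Pic}(C_i)\bigr)$, leaving the identification of the transition cocycle of $F$ with the constants $\delta_{ij}$ implicit; your writeup supplies exactly that detail, so it is the same argument carried out in full.
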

\begin{proof}
It's enough to note that
$$F=N^*_{\mathcal F}\otimes \mathcal O_X(C)\vert_{C}\in {\rm Ker}( {\rm Pic}(C)\rightarrow \bigoplus_{i=1}^n {\rm Pic}(C_i)).$$
\end{proof}
\subsection{Residual Divisor}\label{S:Divisor Residual}

If the representation $\rho_{(\mathcal F, C)}$ is trivial, then there are $\delta_1,..., \delta_n \in \mathbb C^*$ such that $\delta_{ij}=\delta_i/\delta_j$ and, in this way, we can define, unless multiplication by constant factor, the {\it residual divisor}
$$
R=Res(\mathcal F, C)=\sum_{i=1}^n \delta_i C_i.
$$
In order to make precise the definition, we determine the residual divisor by the choice $\delta_1=1$, hence the solution is unique in $\delta_2, ..., \delta_n$ (obviously, this construction depends on the index order from the irreducible components of $C$).

\begin{example}
Suppose now
$$
D=\sum_{i=1}^n C_i+\sum_{k=1}^m S_k
$$
with $C_1,..., C_n$ compact smooth curves and $S_1,...,S_m$ the germs of separatrices which are not supported on $C$ trough reduced non degenerate singularities of $\mathcal F$ in $C-Sing(C)$. Suppose that the representation $\rho_{(\mathcal F, D)}$ is trivial, with {\it residual divisor}
$$
R=Res(\mathcal F, D)=\sum_{i=1}^n \mu_i C_i +\sum_{k=1}^m \delta_k S_k.
$$
Assume also that the foliation don't have weak separatrix supported on $C$.
Then
\begin{eqnarray*}
R\cdot C_j
&=&\sum_{i=1}^n \mu_i C_i\cdot C_j +\sum_{j=1}^m \delta_k S_k\cdot C_j
\nonumber\\
&=&\mu_j(C_j^2+\sum_{i\neq j} \frac{\mu_i}{\mu_j} C_i\cdot C_j +\sum_{k=1}^m \frac{\delta_k}{\mu_j} S_k\cdot C_j)
\nonumber\\
&=&\mu_j(C_j^2-CS(\mathcal F, C_j))
\nonumber\\
&=&0
\end{eqnarray*}
for $j=1,..., n$.
\end{example}

\section{Existence of Separatrix}\label{chapter: ES}

\subsection{Trivial Residual Representation and Separatrices}\label{S:Representação Residual Trivial e Separatrizes}

Let $\mathcal F$ be a holomorphic foliation on a smooth complex surface $X$. Suppose that $\mathcal F$ leaves invariant a {\it compact} curve $C=\sum_{i=1}^n C_i$ with the following properties:
\begin{enumerate}
\item $C_1,..., C_n$ are smooth curves;
\item  the singularities of $\mathcal F$ in $C$ are all reduced;
\item $Sing(C)=\bigcup_{i\neq j}C_i\cap C_j$ are reduced non degenerate singularities of $\mathcal F$.
\end{enumerate}

Write $Sep(\mathcal F, C)=S_1+...+S_m$ for the separatrices (germs) not supported on $C$ and passing trough reduced non degenerate singularities in $Sing(\mathcal F)\cap (C-Sing(C))$.

Let $D=C+Sep(\mathcal F,C)$ and $\rho =\rho_{(\mathcal F, D)}:\pi_1(\Gamma)\rightarrow \mathbb C^*$ be the residual representation of $\mathcal F$ along $D$. Obviously, $\rho_{(\mathcal F, D)}$ is trivial if, and only if, $\rho_{(\mathcal F, C)}$ is trivial.

\begin{thm}\label{T:Número Separatrizes}%[{\bf Theorem \ref{T:Teorema 2}}]\label{T:Número Separatrizes}
Suppose that $\mathcal F$ has no weak separatrix supported on $C$ and that the residual representation $\rho=\rho_{(\mathcal F, D)}$ is trivial, with residues $\mu_1,..., \mu_n$
along $C$ and residues $\delta_1,...,\delta_m$ along $Sep(\mathcal F, C)$, that is, the residual divisor is
$$
R=Res(\mathcal F, D)=\sum_{i=1}^n \mu_i C_i+\sum_{k=1}^m\delta_k S_k.
$$

If the matrix of intersection $[C_i\cdot C_j]_{ij}$ of $C$ is invertible, then
$$\#Sep(\mathcal F, C)=m\geq dim_{\mathbb Q}(\sum_{i=1}^n \mu_i \mathbb Q + \sum_{k=1}^m \delta_k \mathbb Q)\geq 1$$
\end{thm}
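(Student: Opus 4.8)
The plan is to reduce everything to the single numerical identity satisfied by the residual divisor. By the construction of $R$ together with the Camacho--Sad index theorem (this is the computation carried out in the Example of Section~\ref{S:Divisor Residual}, where the hypothesis of no weak separatrix on $C$ is exactly what forces each $C_j$ to be a \emph{strong} separatrix, so that any saddle--node singularity on $C_j$ contributes Camacho--Sad index zero), one has
$$
R\cdot C_j=0,\qquad j=1,\dots,n.
$$
Expanding $R=\sum_{i=1}^n\mu_iC_i+\sum_{k=1}^m\delta_kS_k$ and intersecting with each $C_j$ turns this into the linear system
$$
\sum_{i=1}^n\mu_i\,(C_i\cdot C_j)+\sum_{k=1}^m\delta_k\,(S_k\cdot C_j)=0,\qquad j=1,\dots,n,
$$
which I would record as the matrix identity $M\vec\mu=-\vec b$, where $M=[C_i\cdot C_j]_{ij}$ is the intersection matrix of $C$, $\vec\mu=(\mu_1,\dots,\mu_n)^{\mathrm T}$, and $\vec b=(b_1,\dots,b_n)^{\mathrm T}$ has $b_j=\sum_{k=1}^m\delta_k\,(S_k\cdot C_j)$.

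Next I would invoke the invertibility of $M$. Because $M$ has integer entries, $M^{-1}$ has rational entries, and since every intersection number $S_k\cdot C_j$ is a non-negative integer, the identity $\vec\mu=-M^{-1}\vec b$ expands to
$$
\mu_i=-\sum_{k=1}^m\Big(\sum_{j=1}^n (M^{-1})_{ij}\,(S_k\cdot C_j)\Big)\delta_k ,
$$
exhibiting each $\mu_i$ as a $\mathbb Q$-linear combination of $\delta_1,\dots,\delta_m$. Hence $\mu_i\in\sum_{k=1}^m\delta_k\,\mathbb Q$ for every $i$, so the two $\mathbb Q$-subspaces of $\mathbb C$ satisfy $\sum_{i=1}^n\mu_i\,\mathbb Q\subseteq\sum_{k=1}^m\delta_k\,\mathbb Q$ and therefore
$$
\sum_{i=1}^n\mu_i\,\mathbb Q+\sum_{k=1}^m\delta_k\,\mathbb Q=\sum_{k=1}^m\delta_k\,\mathbb Q .
$$
The right-hand side is spanned by the $m$ numbers $\delta_k$, so its dimension over $\mathbb Q$ is at most $m$, which is precisely the first inequality $m\ge\dim_{\mathbb Q}(\,\cdots)$.

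For the lower bound I would argue by contradiction, which is what actually produces a separatrix. If $Sep(\mathcal F,C)$ were empty, that is $m=0$, then $\vec b=0$ and the invertibility of $M$ would force $\vec\mu=0$; but every residue $\mu_i$ of the residual divisor lies in $\mathbb C^*$, a contradiction. Thus $m\ge1$, and then $\delta_1\in\mathbb C^*$ is a non-zero element of $\sum_{k=1}^m\delta_k\,\mathbb Q$, so this space has dimension at least $1$. Chaining the two bounds gives $m\ge\dim_{\mathbb Q}(\sum_{i=1}^n\mu_i\mathbb Q+\sum_{k=1}^m\delta_k\mathbb Q)\ge1$, as claimed. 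The only point demanding care is the collapse of the combined span: it hinges on $M$ being an \emph{integer} matrix with rational inverse and on the $S_k\cdot C_j$ being integers, so that solving for the $\mu_i$ introduces no new $\mathbb Q$-directions beyond those already spanned by the $\delta_k$. Everything else is a formal consequence of the single relation $R\cdot C_j=0$ together with the non-vanishing of the residues.
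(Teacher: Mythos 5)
Your proof is correct and takes essentially the same approach as the paper: both rest on the identity $R\cdot C_j=0$ (justified by the Camacho--Sad computation in the residual-divisor example, where the no-weak-separatrix hypothesis makes saddle-nodes contribute index zero) together with the invertibility of $[C_i\cdot C_j]_{ij}$. Where the paper phrases this as $n$ $\mathbb Q$-linearly independent integer relations among the $n+m$ residues and deduces $1\leq \dim_{\mathbb Q}\leq m$, you simply make that rank argument explicit by solving $\mu_i$ as $\mathbb Q$-combinations of the $\delta_k$ via $M^{-1}$, including the degenerate case $m=0$; this is an unfolding of the same idea, not a different route.
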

\begin{proof}
Consider the matrix of intersection $A=[C_i\cdot C_j\oplus C_i\cdot S_k]$ of $C$ with $D=C+Sep(\mathcal F, C)$. Take the $\mathbb C$-divisor in $X$ given by $R=\sum_{j=1}^n \mu_jC_j+\sum_{k=1}^m \delta_kS_k$. Hence $C_i\cdot R=R\cdot C_i=0$, for $i=1,... ,n$. Therefore
$$A(\mu_1,...,\mu_n,\delta_1,...,\delta_m)=(0,...,0)\in \mathbb C^{n}.$$
Since the matrix of intersection $[C_i\cdot C_j]$ is invertible, then the matrix $A$ has rank $n$, hence the equality above correspond to $n$ $\mathbb Q$-linearly independent relations (with integer coefficients) between $\mu_1,...,\mu_n,\delta_1,...,\delta_m$. Then
$$1\leq dim_{\mathbb Q}(\sum_{i=1}^n \mu_i \mathbb Q + \sum_{k=1}^m \delta_k \mathbb Q)\leq m=\#Sep(\mathcal F, C).$$
\end{proof}

An interesting consequence is the following

\begin{cor}\label{C:Existe Separatriz}
Suppose that the representation $\rho_{(\mathcal F, C)}$ is trivial. If the matrix of intersection $[C_i\cdot C_j]_{ij}$ of $C$ is invertible, then there is a separatrix $S$ trough $Sing(\mathcal F)\cap (C-Sing(C))$ not supported on $C$ and which is not a weak separatrix of saddle-node.
\end{cor}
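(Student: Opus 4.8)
The plan is to deduce the corollary from Theorem \ref{T:Número Separatrizes} by splitting into two cases according to whether or not $\mathcal F$ admits a weak separatrix supported on $C$. The point is that the theorem carries the standing hypothesis ``$\mathcal F$ has no weak separatrix supported on $C$,'' which the corollary drops; so the work lies precisely in covering the complementary case by a direct geometric argument. Throughout I would use the remark preceding the theorem that $\rho_{(\mathcal F,D)}$ is trivial if and only if $\rho_{(\mathcal F,C)}$ is trivial, so that the corollary's hypothesis on $\rho_{(\mathcal F,C)}$ feeds straight into the theorem.

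First I would treat the case in which $\mathcal F$ has \emph{no} weak separatrix supported on $C$. Here all hypotheses of Theorem \ref{T:Número Separatrizes} are in force: triviality of $\rho_{(\mathcal F,D)}$ (equivalent to triviality of $\rho_{(\mathcal F,C)}$) and invertibility of $[C_i\cdot C_j]_{ij}$. The theorem then yields $m=\#Sep(\mathcal F,C)\geq 1$, so there is at least one germ $S=S_k\in Sep(\mathcal F,C)$. By the very definition of $Sep(\mathcal F,C)$, such an $S$ is not supported on $C$ and passes through a reduced \emph{non degenerate} singularity of $\mathcal F$ lying in $Sing(\mathcal F)\cap(C-Sing(C))$; since that singularity is non degenerate, $S$ cannot be the weak separatrix of a saddle-node. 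This already gives the conclusion in this case.

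Next I would handle the case in which $\mathcal F$ \emph{does} have a weak separatrix supported on $C$. By hypothesis the crossing points $Sing(C)$ are non degenerate singularities of $\mathcal F$, so every saddle-node of $\mathcal F$ on $C$ must sit on the smooth part $C-Sing(C)$. Thus a weak separatrix supported on $C$ means there is a saddle-node $p\in Sing(\mathcal F)\cap(C-Sing(C))$ for which the local component $C_i$ through $p$ is the weak (center-manifold) separatrix. At such a saddle-node the \emph{strong} separatrix is always convergent; call it $S$. Because $p$ avoids the crossing locus, no component of $C$ other than $C_i$ passes through $p$, so $S$ is transverse to $C_i$ and is not supported on $C$; and being the strong separatrix it is, by construction, not a weak separatrix of a saddle-node. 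Hence $S$ is the desired separatrix, and the two cases together prove the corollary.

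The main obstacle is exactly this second case, since Theorem \ref{T:Número Separatrizes} is silent there: one cannot count separatrices via the intersection-matrix argument, and must instead invoke the qualitative theory of saddle-nodes (convergence of the strong separatrix). The subtle points to verify carefully are that $p$ lies off $Sing(C)$, so that the strong separatrix is genuinely transverse to and not contained in $C$, and that ``weak separatrix supported on $C$'' forces the existence of such a saddle-node in the first place; both follow from hypothesis (3) of the subsection's standing assumptions, that the crossings are non degenerate.
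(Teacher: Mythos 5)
Your proposal is correct and takes essentially the same approach as the paper: the identical two-case split according to whether a weak separatrix of a saddle-node is supported on $C$, invoking Theorem \ref{T:Número Separatrizes} in the first case and taking the strong separatrix of the saddle-node in the second. You merely spell out more explicitly the details the paper leaves implicit (that non degeneracy at the crossings forces any saddle-node onto $C-Sing(C)$, and that members of $Sep(\mathcal F,C)$ pass through non degenerate singularities and hence cannot be weak separatrices).
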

\begin{proof}
In fact, if there is no saddle-node in $Sing(\mathcal F)\cap (C-Sing(C))$ with weak separatrix supported on $C$, then the Theorem \ref{T:Número Separatrizes} imply the existence of a singular point of the foliation in $C-Sing(C)$ trough which pass a separatrix not supported on $C$ and which is not a weak separatrix of saddle-node. Otherwise, if there is a saddle-node in $Sing(\mathcal F)\cap (C-Sing(C))$ with weak separatrix supported on $C$, then the  strong separatrix is not supported on $C$.
\end{proof}

\subsection{The Separatrix Theorem}\label{S:Separatrizes}

Let $\mathcal F$ be a singular holomorphic foliation on the smooth complex surface $X$. Suppose that $\mathcal F$ leaves invariant a curve $C=\sum_{i=1}^n C_i$ with the following properties (as before):
\begin{enumerate}
\item $C_1,..., C_n$ are smooth curves;
\item  the singularities of $\mathcal F$ in $C$ are all reduced;
\item $Sing(C)=\bigcup_{i\neq j} C_i \cap C_j$ are reduced non degenerate singularities of $\mathcal F$.
\end{enumerate}

A very interesting situation is when the representation $\rho=\rho_{(\mathcal F, C)}$ associated to the {\it exceptional} curve  is trivial. Remember this notion:

\begin{definition} [{\it Exceptional Curves}]
A compact, connected, reduced curve $C$ in a non singular surface $X$ is called {\it exceptional} if there is a bimeromorphism $\pi:X\rightarrow Y$ such that $C$ is exceptional for $\pi$, i.e., if there is an open neighbourhood $U$ of $C$ in $X$, a point $y\in Y$, and a neighbourhood $V$ of $y$ in $Y$, such that $\pi$ sends $U-C$ biholomorphically over $V-\{y\}$, where $\pi(C)=y$. We will also express this situation by saying that $C$ is {\it contracted} in $y$.
\end{definition}

The following classical result is very important:

\begin{thm}[{\bf Grauert's criterion}, \cite{BPV}, page 91]\label{Grauert Excepcional}
A compact, connected, reduced curve $C=\sum_{i=1}^n C_i$ with irreducible components $C_i$ in a smooth surface is exceptional if, and only if, the matrix of intersection $[C_i\cdot C_j]$ is negative definite.
\end{thm}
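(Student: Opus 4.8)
Because this is Grauert's classical contractibility criterion, I would prove the two implications separately: the necessity (\emph{exceptional} $\Rightarrow$ negative definite) is elementary and rests on a linear-algebra lemma, whereas the sufficiency is the deep analytic statement and is where the real work lies.

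For necessity, suppose $\pi\colon X\to Y$ contracts $C$ to a point $y$, with $Y$ a normal surface. I would pick a nonconstant holomorphic function $h$ on a neighbourhood of $y$ with $h(y)=0$ and pull it back to $g=h\circ\pi$, which vanishes identically on $C$. Writing $\operatorname{div}(g)=Z+H$ with $Z=\sum_i a_i C_i$ ($a_i\ge 1$) supported on $C$ and $H$ effective with no component in $C$, the fact that $g$ is a genuine function forces $\operatorname{div}(g)\cdot C_j=0$, hence $Z\cdot C_j=-H\cdot C_j\le 0$ for every $j$, with strict inequality for at least one $j$ since the strict transform $H$ of $\{h=0\}$ must meet $\pi^{-1}(y)=C$. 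The conclusion then follows from the following lemma applied to $M=[C_i\cdot C_j]$: if $M$ is symmetric with $m_{ij}\ge 0$ for $i\ne j$, the dual graph of $C$ is connected, and there is a vector $a$ with $a_i>0$, $(Ma)_j\le 0$ for all $j$ and $(Ma)_{j_0}<0$ for some $j_0$, then $M$ is negative definite. Setting $x_i=a_it_i$ and using $t_it_j=\tfrac12(t_i^2+t_j^2)-\tfrac12(t_i-t_j)^2$ off the diagonal, I would rewrite
$$x^{T}Mx=\sum_i t_i^2\,a_i (Ma)_i-\tfrac12\sum_{i\ne j} m_{ij}\,a_i a_j\,(t_i-t_j)^2 .$$
Both sums are $\le 0$, so $M$ is negative semidefinite; if $x^{T}Mx=0$ the second sum forces $t_i=t_j$ whenever $C_i\cap C_j\ne\emptyset$, hence all $t_i$ equal by connectedness, and then the first sum (which contains the strictly negative term $a_{j_0}(Ma)_{j_0}$) forces that common value to be $0$, i.e. $x=0$.

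For sufficiency, assume $M$ is negative definite. Since $-M$ is then a positive definite symmetric matrix with nonpositive off-diagonal entries (a Stieltjes matrix), its inverse is entrywise nonnegative, and in fact positive since the dual graph of $C$ is connected, so solving $M b=-\mathbf 1$ yields $b_i>0$; clearing denominators I obtain an effective cycle $Z=\sum_i b_i C_i$ with $Z\cdot C_j<0$ for every $j$. Thus the line bundle $\mathcal O_X(-Z)$ has positive degree on each $C_i$, i.e. $\mathcal O_X(-Z)\vert_C$ is positive, and by openness I can equip $\mathcal O_X(Z)$ with a Hermitian metric $h$ whose curvature is negative on a neighbourhood of $C$. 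Letting $s$ be the canonical section of $\mathcal O_X(Z)$ (so that $\{s=0\}=C$ set-theoretically), the function $\varphi=\log|s|^2_h$ satisfies $i\partial\bar\partial\varphi=-\Theta_h>0$ away from $C$, hence is strictly plurisubharmonic there while tending to $-\infty$ along $C$; its sublevel sets $\{\varphi<c\}$ are therefore a fundamental system of strongly pseudoconvex neighbourhoods of $C$.

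Finally I would invoke Grauert's theorem on $1$-convex (strongly pseudoconvex) spaces: such a neighbourhood admits a Remmert reduction $\pi\colon U\to Y$ onto a normal Stein space which is biholomorphic off $C$ and contracts the maximal compact analytic subset to finitely many points; since $C$ is connected it is sent to a single point, exhibiting $C$ as exceptional. The main obstacle is precisely this last analytic step together with the construction of $\varphi$ at the crossing points of $C$: producing the strictly plurisubharmonic exhaustion from negative definiteness, and then the actual holomorphic contraction, relies on the hard machinery of several complex variables (coherence of direct images and Grauert's finiteness theorem for strongly pseudoconvex spaces), which is where all the depth of the criterion is concentrated.
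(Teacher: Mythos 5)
The paper itself gives no proof of this statement: it is quoted as a classical result with a pointer to \cite{BPV}, p.~91, and the rest of the paper only uses it as a black box. So your proposal can only be measured against the classical argument, not against anything in the text. Your necessity half is correct and complete: pulling back a nonconstant function $h$ vanishing at $y$, writing $\operatorname{div}(h\circ\pi)=Z+H$, using $\operatorname{div}(h\circ\pi)\cdot C_j=0$ to get $Z\cdot C_j\le 0$ with strict inequality somewhere, and then the linear-algebra lemma --- your identity
$x^{T}Mx=\sum_i t_i^2\,a_i (Ma)_i-\tfrac12\sum_{i\ne j} m_{ij}\,a_i a_j\,(t_i-t_j)^2$
is correct, and the equality analysis via connectedness is the standard (Mumford/Du~Val) proof. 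The reduction of sufficiency to a cycle $Z=\sum b_iC_i$ with $Z\cdot C_j<0$ via the Stieltjes/M-matrix fact is also fine.

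The genuine gap is the sentence ``by openness I can equip $\mathcal O_X(Z)$ with a Hermitian metric $h$ whose curvature is negative on a neighbourhood of $C$.'' Positive degree of $\mathcal O_X(-Z)$ on each $C_i$ only produces metrics on the restrictions $\mathcal O_X(-Z)\vert_{C_i}$ whose curvature is positive \emph{as a form on the one-dimensional curve}; extending such a metric smoothly to a neighbourhood controls the curvature, at points of $C$, only in the directions tangent to $C$, and says nothing about the transverse directions. Openness of positivity would apply if you already had a metric defined near $C$ whose curvature form is negative definite on the full tangent space $T_xX$ at every $x\in C$ --- which is exactly what has to be constructed. Note that even at a node, positivity of a Hermitian form on the two branch directions does not imply it is definite on $T_xX$ (a Hermitian form on $\mathbb C^2$ can be positive on two distinct complex lines and negative on a third). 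Producing the metric (equivalently, the strictly plurisubharmonic function $\varphi$ tending to $-\infty$ along $C$) from the numerical condition $Z\cdot C_j<0$ is the actual content of Grauert's theorem; it is done either by Grauert's construction on the infinitesimal neighbourhoods $mZ$, or cohomologically as in Laufer, and it cannot be dispatched in one line. Your closing remark concedes that this step carries ``all the depth,'' but as written your argument for it is not merely incomplete, it is an invalid deduction; the rest of the sufficiency outline (sublevel sets of $\varphi$, Grauert's Satz~5 / Remmert reduction, connectedness giving a single point) is the correct classical route once that lemma is granted.
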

 
Now we suppose triviality of the residual representation and that the curve is exceptional. The theorem bellow is in fact a simple consequence of Corollary \ref{C:Existe Separatriz}, because every negative definite matrix is invertible. %, but we prefer to give an independent proof following similar ideas.

\begin{thm}\label{T:Trivial Geral}
If the curve $C$ is compact and exceptional and the representation $\rho=\rho_{(\mathcal F, C)}$ is trivial, then there is at last one singularity $p\in Sing(\mathcal F) \cap (C-Sing(C))$ and separatrix trough $p$ not supported on $C$ and which is not a weak separatrix of saddle-node.
\end{thm}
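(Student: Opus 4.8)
The plan is to deduce the statement directly from Corollary \ref{C:Existe Separatriz}, whose two hypotheses are triviality of the residual representation $\rho_{(\mathcal F,C)}$ and invertibility of the intersection matrix $[C_i\cdot C_j]_{ij}$. The first hypothesis is assumed outright in the statement, so the whole task reduces to supplying the second one from the assumption that $C$ is exceptional.

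First I would invoke Grauert's criterion (Theorem \ref{Grauert Excepcional}): since $C$ is a compact, connected, reduced curve in a smooth surface and is exceptional, its intersection matrix $[C_i\cdot C_j]$ is negative definite. A real symmetric negative definite matrix has all eigenvalues strictly negative, so its determinant (the product of those eigenvalues) is nonzero; hence the matrix is invertible. This is exactly the invertibility hypothesis required by Corollary \ref{C:Existe Separatriz}. Note that the three standing assumptions of the subsection ($C_i$ smooth, all singularities of $\mathcal F$ on $C$ reduced, and $Sing(C)$ consisting of reduced non-degenerate singularities) are precisely the context in which both the residual representation and Corollary \ref{C:Existe Separatriz} are formulated, so no extra bookkeeping is needed to align the settings.

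With both hypotheses verified, Corollary \ref{C:Existe Separatriz} applies verbatim and produces a point $p\in Sing(\mathcal F)\cap(C-Sing(C))$ together with a separatrix through $p$ that is not supported on $C$ and is not a weak separatrix of a saddle-node, which is exactly the asserted conclusion.

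I do not expect a genuine obstacle in this final step: the passage from \emph{exceptional} to \emph{invertible intersection matrix} is immediate once negative definiteness is extracted from Grauert, and the substantive content of the result is already packaged upstream. Indeed, all the real work lives in the chain Proposition \ref{P:Trivialidade} $\to$ Proposition \ref{P:Resíduo e Representação Generalizados} $\to$ Theorem \ref{T:Número Separatrizes} $\to$ Corollary \ref{C:Existe Separatriz}, which converts triviality of $\rho_{(\mathcal F,C)}$ and invertibility of $[C_i\cdot C_j]$ into the existence of a residual $\mathbb C$-divisor $R$ with $R\cdot C_i=0$ for all $i$ and forces the appearance of at least one separatrix transverse to $C$. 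The only thing this theorem adds is the geometric hypothesis (exceptionality) that guarantees that invertibility, so the proof is genuinely a one-line specialization.
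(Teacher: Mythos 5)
Your proposal is correct and is exactly the paper's argument: the paper itself disposes of Theorem \ref{T:Trivial Geral} by noting it is ``a simple consequence of Corollary \ref{C:Existe Separatriz}, because every negative definite matrix is invertible,'' with negative definiteness supplied by Grauert's criterion (Theorem \ref{Grauert Excepcional}). Your spelled-out version (eigenvalues negative, determinant nonzero, hence invertible) fills in the same one-line specialization the author intended.
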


As a consequence we have a criterion for the existence of separatrix in singular (normal) surface.

\begin{cor}\label{C:Separatriz}
Let $\mathcal F$ be a foliation on the singular normal surface $X$. Let $\pi:Y\rightarrow X$ be a bimeromorphism which resolve the singularity $p$ and such that the induced foliation $\mathcal G=\pi^*\mathcal F$ is reduced.
Suppose that there exists a connected sub-curve $C=\sum_{i=1}^n C_i\subset E=\pi^{-1}(p)$ such that the foliation $\mathcal G$ is reduced non degenerated at the crossing points $Sing(C)=\bigcup_{i\neq j} C_i \cap C_j$ and 
\[q\in  Sing(E)\cap (C-Sing(C))\Rightarrow CS(\mathcal G, C, q)=0.
\]
%$\mathcal G$ has no weak separatrix of saddle-node supported on $C$.
If the representation $\rho=\rho_{({\mathcal G},C)}$ is trivial, then $\mathcal F$ has a separatrix trough $p$.
\end{cor}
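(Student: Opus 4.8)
The plan is to apply Theorem \ref{T:Trivial Geral} to the reduced foliation $\mathcal G$ and the invariant curve $C$ upstairs on $Y$, and then to push the resulting separatrix down to $X$ by $\pi$; the only delicate point is to guarantee that the separatrix produced on $Y$ is not contained in the exceptional divisor $E$, and this is exactly where the index hypothesis $CS(\mathcal G, C, q)=0$ is used.

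First I would verify the hypotheses of Theorem \ref{T:Trivial Geral} for the pair $(\mathcal G, C)$. Since $\mathcal G=\pi^*\mathcal F$ is reduced, every singularity of $\mathcal G$ on $C$ is reduced; the components $C_i$ are smooth and the crossings $Sing(C)$ are reduced non-degenerate by assumption, and $C$ is invariant since $\rho_{(\mathcal G,C)}$ is defined. To see that $C$ is exceptional, recall that $E=\pi^{-1}(p)$, being the exceptional divisor of a resolution of a normal surface singularity, has negative definite intersection matrix $[E_i\cdot E_j]$; the matrix $[C_i\cdot C_j]$ is the principal submatrix indexed by the components of $C$, hence is itself negative definite, so $C$ is exceptional by Grauert's criterion (Theorem \ref{Grauert Excepcional}). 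As $\rho_{(\mathcal G,C)}$ is trivial, Theorem \ref{T:Trivial Geral} provides a singularity $q\in Sing(\mathcal G)\cap(C-Sing(C))$ and a separatrix $S$ through $q$, not supported on $C$ and not a weak separatrix of a saddle-node.

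The main step is to show $S\not\subset E$. Since $q\in C-Sing(C)$, either $q$ is a smooth point of $E$ --- in which case the unique local branch of $E$ at $q$ is the component of $C$ through $q$, so the separatrix $S$, being transverse to $C$, is not contained in $E$ --- or else $q\in Sing(E)\cap(C-Sing(C))$. In the latter case the hypothesis gives $CS(\mathcal G,C,q)=0$; because the Camacho-Sad index of $\mathcal G$ along $C$ at $q$ is the quotient of the eigenvalue transverse to $C$ by the eigenvalue along $C$ (see Example \ref{Exemplo Resíduo}), its vanishing forces the transverse eigenvalue to be zero. Thus $q$ is a saddle-node whose weak separatrix is the branch of $E-C$ meeting $C$ at $q$, while its strong separatrix is the $C$-component. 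The only separatrix through $q$ not supported on $C$ is then this weak one, contradicting the choice of $S$. Hence the second case is impossible and $S\not\subset E$.

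Finally, since $S$ meets $E$ at $q$ but is not contained in $E$, its image $\pi(S)$ is an irreducible curve through $\pi(q)=p$; as $\pi$ conjugates $\mathcal G$ with $\mathcal F$ away from $E$ and $S$ is $\mathcal G$-invariant, $\pi(S)$ is $\mathcal F$-invariant, hence a separatrix of $\mathcal F$ through $p$. The crux of the whole argument is the dichotomy of the previous paragraph: the hypothesis $CS(\mathcal G,C,q)=0$ along $C-Sing(C)$ is precisely what turns every non-$C$ separatrix at those points into a weak saddle-node separatrix, which Theorem \ref{T:Trivial Geral} already discards, leaving only separatrices transverse to $E$ and therefore genuine curves through $p$.
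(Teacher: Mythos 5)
Your proposal follows exactly the route of the paper's own (three-line) proof: verify that $C$ is exceptional, invoke Theorem \ref{T:Trivial Geral} to produce a singularity $q\in Sing(\mathcal G)\cap(C-Sing(C))$ and a separatrix $S$ not supported on $C$ and not a weak separatrix of a saddle-node, use the index hypothesis to conclude $S\not\subset E$, and project by $\pi$. You actually supply details the paper omits, in particular the justification of exceptionality of the sub-curve (the matrix $[C_i\cdot C_j]$ is a principal submatrix of the negative definite matrix of $E$, then Grauert's criterion, Theorem \ref{Grauert Excepcional}); this is the intended argument.

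However, the key step of your dichotomy is overstated, and it is precisely the step the paper itself waves through with ``(by hypothesis)''. You assert that $CS(\mathcal G,C,q)=0$ ``forces the transverse eigenvalue to be zero'', i.e.\ that $q$ is a saddle-node with $C$ as its \emph{strong} separatrix. The eigenvalue-ratio description of the Camacho--Sad index is only valid when the eigenvalue along $C$ is nonzero. If $q$ is a saddle-node whose \emph{weak} separatrix is the branch of $C$, the index along $C$ is not an eigenvalue ratio but the formal invariant $\nu$ of the saddle-node, and $\nu$ can vanish: for $\omega=x^{2}dy-y\,dx$ the index along the weak separatrix $\{y=0\}$ equals $0$. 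In that scenario the hypothesis $CS(\mathcal G,C,q)=0$ holds, yet the separatrix produced by Theorem \ref{T:Trivial Geral} (through the second case in the proof of Corollary \ref{C:Existe Separatriz}) is the \emph{strong} separatrix at $q$, which may be exactly the branch of $E-C$ at $q$; then $S\subset E$, $\pi(S)=p$ is a point, and the projection step yields nothing. So ``the second case is impossible'' has a loophole. To be fair, the paper's proof has the identical loophole, and it is harmless in the only place the corollary is applied (the generalization of Camacho's theorem for tree resolution graphs), because there the sub-curve $C$ is constructed so that at every crossing $q\in Sing(E)\cap(C-Sing(C))$ the branch of $C$ is the strong separatrix of the saddle-node; under that reading of the hypothesis your argument and the paper's coincide and are correct. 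If you want your write-up to be airtight as stated, you should either strengthen the hypothesis to ``$q$ is not a singularity of $\mathcal G$, or is a saddle-node whose strong separatrix is the branch of $C$'', or add an argument excluding the weak-separatrix-on-$C$ case.
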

\begin{proof}
The curve $C$ is exceptional, hence we can use Theorem \ref{T:Trivial Geral}. Thus, there exists a singularity $q\in C-Sing(C)$ and separatrix (of $\mathcal G$) trough $q$ but not supported on $E$ (by hypothesis). Such a separatrix will project by $\pi$ to a separatrix of $\mathcal F$ trough $p$.
\end{proof}

\begin{cor}[Generalization of the Separatrix Theorem by Camacho, \cite{C}]\label{C:Generalização do Teorema da Separatriz}
Consider a complex normal irreducible surface $X$. Suppose the resolution graph of $X$ at a point $p$ is a tree, i. e., a finite contractible $1$-dimensional complex. Then any germ of holomorphic foliation, singular at $p$, admits an invariant analytic curve through $p$.
%Let $\mathcal F$ be a foliation on a singular normal surface $X$. Let $E$ be an exceptional divisor obtained by resolution of the singularity $p\in X$ and reduction of singularities of the foliation. If the dual graph of $E$ is a tree, then there exists  separatrix trough $p$.
\end{cor}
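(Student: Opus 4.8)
The plan is to deduce the statement directly from Corollary \ref{C:Separatriz}. First I would fix a germ of foliation $\mathcal F$ singular at $p$, take a bimeromorphism $\pi\colon Y\to X$ resolving the singularity, and then apply Seidenberg's theorem to $\pi^{*}\mathcal F$, blowing up further until $\mathcal G=\pi^{*}\mathcal F$ is reduced near $E=\pi^{-1}(p)$. The preliminary point to check is that this does not destroy the hypothesis: each blow-up of a point of a normal crossings divisor either attaches a pendant vertex or subdivides an edge, so it never creates a cycle, and hence the dual graph of $E$ stays a tree. Moreover $E$ is connected and, being the exceptional locus of a resolution of a normal surface singularity, has negative definite intersection matrix, so it is exceptional by Grauert's criterion (Theorem \ref{Grauert Excepcional}); any connected subcurve inherits negative definiteness and is thus exceptional as well.

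I would then split into two cases. If some component $E_i$ is \emph{not} $\mathcal G$-invariant, a generic point $q\in E_i$ lies on a leaf $L$ transverse to $E$; since $L\not\subset E$ and $\pi$ is biholomorphic off $E$, the proper mapping theorem shows that $\pi(L)$ is an analytic $\mathcal F$-invariant curve through $p$, i.e. a separatrix, and we are finished. So the essential case is that every component of $E$ is invariant, and then $E$ is a compact connected invariant curve with normal crossings whose dual graph is a tree.

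The heart of the argument is to manufacture a connected subcurve $C\subseteq E$ meeting the hypotheses of Corollary \ref{C:Separatriz}: reduced non-degenerate at the internal crossings $Sing(C)$, and $CS(\mathcal G,C,q)=0$ at each boundary crossing $q\in Sing(E)\cap(C-Sing(C))$. Here one uses that the Camacho--Sad index vanishes along the strong separatrix of a saddle-node (the relevant eigenvalue ratio is $0$) while it is nonzero at a non-degenerate crossing, so the requirement becomes a purely combinatorial selection on the tree. I would delete every edge corresponding to a saddle-node crossing, breaking the tree into subtrees on which all crossings are non-degenerate, orient each deleted edge from its strong-separatrix component to its weak one, and contract each subtree to a vertex. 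The resulting oriented tree is acyclic, hence has a source, a vertex all of whose incident edges point outward; taking $C$ to be the subtree contracted to that source produces a connected curve whose internal crossings are non-degenerate and whose every boundary crossing is a saddle-node with $C$ on the strong side, so that $CS(\mathcal G,C,q)=0$ there. I expect this step to be the main obstacle, both in pinning down the index computation at saddle-nodes and in the existence of the source.

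Finally, since $C$ is a connected subgraph of a tree its dual graph is contractible, so the residual representation $\rho_{(\mathcal G,C)}$ is trivial; together with the exceptionality of $C$, Corollary \ref{C:Separatriz} (which in turn rests on Theorem \ref{T:Trivial Geral}) yields a separatrix of $\mathcal G$ through a point of $C-Sing(C)$ not contained in $E$. Its image under $\pi$ is the desired invariant analytic curve of $\mathcal F$ through $p$.
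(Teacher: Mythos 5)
Your proposal is correct and follows essentially the same route as the paper: reduce to Corollary \ref{C:Separatriz} and produce the subcurve $C$ by Toma's argument of deleting the saddle-node edges of the (tree) dual graph and selecting a component lying on the strong-separatrix side of every adjacent deleted edge --- your contract-and-find-a-source formulation is just a cleaner phrasing of the paper's sequential ``always keep the strong component'' process. Your only additions are the explicit preliminary checks (that further blow-ups preserve the tree property, and the dicritical case where some component of $E$ is not invariant), which the paper leaves implicit.
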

\begin{proof}
Let $E$ be an exceptional divisor obtained by resolution of the singularity $p\in X$ and reduction of singularities of the foliation. The dual graph of $E$ is a tree, by hypothesis. It's enough to show the existence of a connected sub-curve $C\subset E$ in the conditions of the Corollary above, because such sub-curve will has also contractible dual graph, hence the representation $\rho_{(\mathcal G, C)}$ will be trivial.

We follow the argument of M. Toma in \cite{Toma}. Let $\Lambda\subset \Gamma$ be the sub-graph obtained by the union of all edges in $\Gamma$ that do not connect vertices by a crossing in saddle-node, that is, we remove from $\Gamma$ the edges which correspond to crossing in saddle-node. Note that when we remove such edge from the connected tree we divide the tree in two connected components, one is the component of the strong separatrix and the other is the component of the weak separatrix. Making this process and always fixing the component correspondent to the strong separatrix, at the end we obtain a connected component $\Gamma_0\subset \Lambda$ without vertices which are weak separatrices. Then $\Gamma_0$ is the dual graph of a sub-curve $C$ with the desired properties.
\end{proof}

In particular, we have the important

\begin{cor}[Separatrix Theorem of Camacho and Sad, \cite{CSS}]\label{C:Teorema da Separatriz}
Let $\mathcal F$ be a foliation on the smooth surface $X$ and $p\in Sing(\mathcal F)$. Then there exists a separatrix of $\mathcal F$ through $p$.
\end{cor}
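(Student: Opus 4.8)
The plan is to realize the classical smooth statement as the degenerate instance of the tree case already established in Corollary \ref{C:Generalização do Teorema da Separatriz}. A smooth surface is in particular normal and irreducible, and at the smooth point $p$ no resolution of the ambient surface is needed; equivalently, the resolution dual graph of $X$ at $p$ is empty and hence vacuously a tree. So the entire weight of the argument will be carried by the reduction of singularities of the foliation rather than of the surface, and the hypotheses of Corollary \ref{C:Generalização do Teorema da Separatriz} will be met.

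First I would invoke Seidenberg's theorem to reduce the singularities of $\mathcal F$ over $p$: there is a finite composition of blow-ups $\pi:Y\rightarrow X$ such that the pulled-back foliation $\mathcal G=\pi^*\mathcal F$ is reduced along the exceptional divisor $E=\pi^{-1}(p)$, which is a simple normal crossing union of smooth rational curves. The crucial structural remark is that, because $\pi$ is a composition of point blow-ups starting from the smooth point $p$, the dual graph $\Gamma$ of $E$ is a tree: each blow-up either attaches a new leaf (blowing up a smooth point of a component) or subdivides an edge by inserting a new vertex adjacent to the two meeting components (blowing up a crossing point), and neither operation creates a cycle. Moreover $E$ is exceptional, so by Grauert's criterion (Theorem \ref{Grauert Excepcional}) its intersection matrix is negative definite, and so is that of any connected sub-curve.

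Next I would run the sub-curve construction from the proof of Corollary \ref{C:Generalização do Teorema da Separatriz} verbatim. Following Toma, from $\Gamma$ I remove every edge corresponding to a crossing at which $\mathcal G$ has a saddle-node, each removal splitting the tree into the strong-separatrix side and the weak-separatrix side; keeping always the strong-separatrix component, I arrive at a connected subtree $\Gamma_0$ carrying no weak-separatrix vertex. Let $C=\sum_{i=1}^n C_i\subset E$ be the corresponding sub-curve. By construction the crossings $Sing(C)=\bigcup_{i\neq j}C_i\cap C_j$ are reduced non degenerate singularities of $\mathcal G$, and at each point $q\in Sing(E)\cap(C-Sing(C))$ that was severed one has $CS(\mathcal G,C,q)=0$, so the hypotheses of Corollary \ref{C:Separatriz} hold. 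Since $\Gamma_0$ is a tree, the residual representation $\rho_{(\mathcal G,C)}$ is trivial, and $C$ is exceptional with invertible (indeed negative definite) intersection matrix. Theorem \ref{T:Trivial Geral} then yields a singularity in $C-Sing(C)$ together with a separatrix through it not supported on $E$, and this separatrix projects under $\pi$ to an invariant germ of curve of $\mathcal F$ through $p$.

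The hard part is the saddle-node bookkeeping in the subtree construction: one must check that detaching the weak-separatrix side at every saddle-node leaves a nonempty connected $\Gamma_0$ all of whose severed outer points carry vanishing Camacho-Sad index, so that Corollary \ref{C:Separatriz} genuinely applies, and that the separatrix produced upstairs is transverse to (not contained in) $E$, so that its image is a true curve through $p$ rather than the point $p$ itself. Once this is arranged, triviality of $\rho_{(\mathcal G,C)}$ is automatic from the tree structure and no hypothesis on saddle-nodes of $\mathcal F$ is needed, recovering the full generality of the Camacho--Sad theorem.
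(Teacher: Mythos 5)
Your proposal is correct and takes essentially the same approach as the paper: there the Camacho--Sad theorem is stated as an immediate particular case (``In particular'') of Corollary \ref{C:Generalização do Teorema da Separatriz}, since over a smooth point any composition of blow-ups produces an exceptional divisor whose dual graph is a tree. Your detailed steps --- Seidenberg reduction, the tree structure of the exceptional graph, Toma's removal of saddle-node edges keeping the strong-separatrix side, and the application of Corollary \ref{C:Separatriz} and Theorem \ref{T:Trivial Geral} --- are precisely the argument the paper uses to prove that corollary.
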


\subsection{Torsion Residual Representation and Separatrices}\label{S:Representação Residual Torção e Separatrizes}
We begin with the familiar {\it covering trick} in its simplest version (non-branched).
\begin{prop} Let $X$ be a complex manifold and let $L$ be a line bundle over $X$ of order $k>0$ in ${\rm Pic}(X)$, that is, $k$ is the smaller positive integer such that $L^{\otimes k}$ is trivial. Then there is a regular {\it cyclic} covering of order $k$, $g:Y\rightarrow X$, such that $g^*L$ is trivial.
\end{prop}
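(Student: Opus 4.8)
The plan is to construct $Y$ explicitly as the cyclic cover determined by the $k$-th root of the trivializing section, using the standard total-space-in-a-line-bundle construction. Since $L$ has order $k$ in $\mathrm{Pic}(X)$, we have a trivialization $\phi\colon L^{\otimes k}\xrightarrow{\sim}\mathcal{O}_X$; equivalently, fixing a cover $\{U_i\}$ on which $L$ is given by transition functions $g_{ij}\in\mathcal{O}^*_X(U_i\cap U_j)$, the hypothesis $L^{\otimes k}\cong\mathcal{O}_X$ means that after adjusting by a $0$-cochain we may assume $g_{ij}^k=1$, i.e. the cocycle $(g_{ij})$ takes values in the group $\mu_k$ of $k$-th roots of unity. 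This is the key reduction and I expect it to be where the order-$k$ hypothesis really gets used.

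\emph{First} I would let $\mathbb{L}$ denote the total space of $L$ with projection $p\colon\mathbb{L}\to X$, and let $s\colon\mathbb{L}\to p^*L$ be the tautological section. The trivialization $\phi$ identifies $p^*L^{\otimes k}$ with $\mathcal{O}_{\mathbb{L}}$, so $\phi(s^{\otimes k})$ is a genuine holomorphic function on $\mathbb{L}$, and I define
\[
Y=\{\,\xi\in\mathbb{L}\ :\ \phi(s^{\otimes k})(\xi)=1\,\}.
\]
\emph{Then} I would set $g=p|_Y\colon Y\to X$. Fibrewise, over a trivializing chart $U_i$ the fibre of $\mathbb{L}$ is $\mathbb{C}$ with coordinate $w_i$, and $Y$ is cut out by $w_i^k=1$; thus $g$ is a covering of degree $k$ with fibre a torsor under $\mu_k$, and the $\mu_k$-action by scaling the fibres of $\mathbb{L}$ preserves $Y$ and makes $g$ a regular (Galois) cyclic covering with group $\mathbb{Z}/k$. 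Smoothness of $Y$ follows because $dw_i\neq 0$ on $\{w_i^k=1\}$, so $Y$ is a complex manifold.

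\emph{Finally}, to see $g^*L$ is trivial, observe that the tautological section $s$ restricts on $Y$ to a section of $g^*L$ which is \emph{nowhere vanishing}: on $Y$ we have $\phi(s^{\otimes k})=1\neq 0$, so $s$ cannot vanish at any point of $Y$. A nowhere-vanishing holomorphic section of a line bundle trivializes it, giving $g^*L\cong\mathcal{O}_Y$. Concretely in the cocycle picture, once $g_{ij}\in\mu_k$ one checks that the $w_i$ glue to a nowhere-zero section because $w_i=g_{ij}w_j$ and $w_i^k=w_j^k=1$.

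\emph{The hard part} is verifying that $Y$ is connected so that the covering is genuinely cyclic of order exactly $k$ rather than a disjoint union of smaller covers. This is exactly where minimality of $k$ enters: if $Y$ split off a connected component of degree $d<k$, then $L$ would already be trivialized by a $d$-fold cover and one can argue that $L^{\otimes d}$ is trivial, contradicting $\ord(L)=k$. I would prove connectedness by showing that the monodromy of $g$ is the full cyclic group $\mu_k$: a loop $\gamma$ in $X$ acts on the fibre $\{w^k=1\}$ by multiplication by the class of $(g_{ij})$ evaluated on $\gamma$, and if this monodromy image were a proper subgroup $\mu_d\subsetneq\mu_k$ then the cocycle $(g_{ij})$ would be cohomologous to one valued in $\mu_d$, forcing $L^{\otimes d}\cong\mathcal{O}_X$ and contradicting minimality. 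Everything else is routine, so I would keep those verifications brief.
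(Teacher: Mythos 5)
Your proposal is correct, and it is essentially the proof the paper points to: the paper's ``proof'' is only a citation to Barth--Peters--Van de Ven (p.~54), where this same construction appears --- reduce the cocycle of $L$ to one valued in the $k$-th roots of unity and cut the covering $Y$ out of the total space of $L$ by $w^k=1$. Your treatment of connectedness via the monodromy image (using minimality of $k$ to rule out a proper subgroup $\mu_d\subsetneq\mu_k$) is exactly the point that makes the covering cyclic of order $k$, so nothing is missing.
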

\begin{proof} See \cite{BPV}, page 54.
% Let $\tau:L\rightarrow X$ be the projection of the bundle. Consider a trivialising section $s$ of $L^{\otimes k}$ and let $f:L\rightarrow L^{\otimes k}$ be the function given by the $k^{th}$ tensor power $f(v)=v^{\otimes k}=v\otimes ... \otimes v$ ($k$ times).

%Then $Y=f^{-1}(s(X))\subset L$ is a submanifold of $L$ and the projection $g=\tau\vert_{Y}:Y\rightarrow X$ is a regular  {\it cyclic} covering of order $k$ (for generalization of this construction, see \cite{BPV}, page 54). The automorphisms of covering are the restrictions to $Y$ of the automorphisms of bundle in $L$ correspondents to multiplication by $k$-roots of unity.

%By construction, the bundle $g^*L$ is trivial. To see this, note that just the inclusion of $Y$ in $L$ gives to us a global trivialising section of $g^*L$, since each element $v\in Y\subset L$ is non zero as an element of $L$ and, therefore, is non zero as an element of $g^*L$.

%$$\xymatrix{
%g^*L\ar@{^{(}->}[r]\ar@{->}[d]&\tau^*L\ar@{->}[r] \ar@{->}[d]&L\ar@{->}[r]^{f}\ar@{->}[d]^-{\tau}&L^{\otimes k}\ar@{->}[d]\\
%Y\ar@{^{(}->}[r]^-{inclusion}\ar@/_0.4cm/[rr]_{g}&L\ar@{->}[r]^-{\tau} &X\ar@{=}[r]&X
%}$$
\end{proof}

Remember also the following result from Complex Geometry:

\begin{thm}[Grauert, {\bf Satz 5}, \cite{Uber} page 340]\label{T:Grauert Excepcional Geral}
Let $A$ be a compact connected analytic subset of the analytic variety $X$. Then $A$ is an exceptional variety if and only if it has a strongly pseudoconvex neighbourhood $G$ in $X$ such that $A$ is the maximal compact analytic subset of $G$.
\end{thm}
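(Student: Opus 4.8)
The statement is Grauert's exceptionality criterion in its general (higher-dimensional) form, so the plan is to deduce it from Grauert's fundamental theorem on strongly pseudoconvex spaces rather than to rebuild that machinery. I would prove the two implications separately, treating necessity by an elementary pullback argument and sufficiency by invoking the holomorphic convexity / Remmert reduction theorem.

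For necessity, suppose $A$ is exceptional, so there is a proper modification $\pi\colon X\to X'$ with $\pi(A)=y'$ a single point (recall $A$ is connected) and $\pi$ biholomorphic over $X'\setminus\{y'\}$. First I would choose a small Stein neighbourhood $V'$ of $y'$ in $X'$: embedding a neighbourhood of $y'$ into some $\C^N$ and intersecting with a small ball yields such a $V'$, together with a strictly plurisubharmonic exhaustion $\psi\colon V'\to[0,c)$, for instance the restriction of the squared distance to the centre. Setting $G=\pi^{-1}(V')$ and $\varphi=\psi\circ\pi$, properness of $\pi$ makes $\varphi$ an exhaustion of $G$, and since $\pi$ is biholomorphic over $V'\setminus\{y'\}$ the function $\varphi$ is strictly plurisubharmonic on $G\setminus A$; as $A$ is compact this shows $G$ is strongly pseudoconvex. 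It then remains to check that $A$ is the maximal compact analytic subset: any compact analytic $B\subset G$ has image $\pi(B)$ a compact analytic subset of the Stein space $V'$, hence finite, so a positive-dimensional part of $B$ could only lie over $y'$, forcing $B\subset A$.

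For sufficiency I would invoke Grauert's main theorem: a strongly pseudoconvex space $G$ is holomorphically convex and admits a proper Remmert reduction $r\colon G\to G'$ onto a Stein space, with $r$ biholomorphic away from the maximal compact analytic subset and contracting the connected components of that subset to points. Applied to our $G$ with maximal compact analytic subset equal to $A$, which is connected, the map $r$ sends $A$ to a single point $y'$ and is biholomorphic on $G\setminus A$, exhibiting $A$ as exceptional. The entire weight of the argument rests here: on Grauert's finiteness and convexity theorem for strongly pseudoconvex spaces, the analytic counterpart of blowing down a strictly pseudoconvex boundary. This is the genuinely deep input, and it is precisely what I would cite from \cite{Uber} rather than reconstruct; the remaining steps are the local construction of Stein neighbourhoods and the observation that compact analytic subsets of Stein spaces are finite.
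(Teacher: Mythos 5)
You should first note that the paper contains no proof of this statement: it is quoted as Satz~5 of Grauert's paper \cite{Uber} and used as a black box (to produce strongly pseudoconvex neighbourhoods of exceptional divisors, and to conclude that preimages of exceptional curves under finite coverings are exceptional). So the only meaningful comparison is with the classical argument, which your proposal reconstructs correctly in outline. Two points deserve attention. First, in the necessity half you gloss over properness: with the paper's definition of exceptionality you are only given a map $\pi$ defined near the curve that restricts to a biholomorphism $U - C \to V - \{y\}$, and this map need not be proper over $V$ without shrinking. You need the routine but necessary step: choose a relatively compact neighbourhood $W \Subset U$ of $A$, observe that $\pi(\partial W)$ is a compact set not containing $y$, pick $V'$ disjoint from $\pi(\partial W)$, and set $G = \pi^{-1}(V') \cap W$; only then is $\varphi = \psi \circ \pi$ an exhaustion of $G$. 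Relatedly, ``maximal compact analytic subset'' must be read in Grauert's sense (nowhere-discrete, i.e.\ positive-dimensional, analytic subsets), which your remark about the ``positive-dimensional part'' of $B$ implicitly respects but should state. Second, your sufficiency half correctly places the entire weight on Grauert's finiteness/convexity theorem for strongly pseudoconvex spaces together with the Remmert reduction; this is not circular, since those results are distinct theorems (the finiteness and holomorphic-convexity statements) in the very same reference \cite{Uber} as the Satz~5 you are proving --- but you should cite them as such explicitly, because a vague appeal to ``Grauert's main theorem'' inside the same paper would make the argument look circular when it is not. With these repairs your proposal is a faithful reconstruction of the original literature proof, neither more elementary nor more general than what the paper relies on by citation.
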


As a consequence, in the particular case of our interest, we obtain the following fact: if $g:Y\rightarrow X$ is a finite covering of the complex surface $X$ and $C\subset X$ is an exceptional curve, then $g^{-1}(C)$ is an exceptional curve in $Y$.

Suppose that the foliation $\mathcal F$ leaves invariant a curve $C=\sum_{i=1}^n C_i$ such that:
\begin{enumerate}
\item $C_1,..., C_n$ are smooth curves; 
\item  all singularities of $\mathcal F$ in $C$ are reduced non degenerated.
\end{enumerate}
With this hypotheses, we obtain a version of Theorem \ref{T:Trivial Geral} in the case of torsion residual representation.

\begin{thm}\label{T:Trivial Geral Generalizado}
If the $\mathcal F$-invariante curve $C=\sum_{i=1}^{n} C_i\subset X$ is compact and exceptional and the representation $\rho=\rho_{(\mathcal F, C)}$ is torsion of order $k$, then there is at last one singularity $p\in Sing(\mathcal F) \cap (C-Sing(C))$ and separatrix trough $p$, not supported in $C$.
\end{thm}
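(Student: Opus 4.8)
The plan is to reduce the torsion case to the trivial-representation case already handled by Theorem~\ref{T:Trivial Geral} via the covering trick. The residual representation $\rho=\rho_{(\mathcal F,C)}$ corresponds, under Proposition~\ref{P:Resíduo e Representação Generalizados}, to the line bundle $F=N^*_{\mathcal F}\otimes\mathcal O_X(C)\vert_C\in T(C)$; saying $\rho$ is torsion of order $k$ means $F^{\otimes k}=\mathcal O_C$, while $F^{\otimes j}\neq\mathcal O_C$ for $0<j<k$. First I would extend $F$ to a line bundle $L$ of order $k$ on a neighbourhood $U$ of $C$ in $X$ (shrinking $U$ so that $L^{\otimes k}$ is trivial on $U$), and then invoke the covering trick (the Proposition preceding this theorem) to produce a regular cyclic covering $g:Y\rightarrow U$ of order $k$ with $g^*L$ trivial.

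The second step is to transport the entire geometric configuration to $Y$. I would set $\tilde C=g^{-1}(C)$ and let $\mathcal G=g^*\mathcal F$ be the pulled-back foliation on $Y$. Since $g$ is an unramified cyclic cover, $\tilde C$ is again a union of smooth curves meeting at normal crossings, $\mathcal G$ is reduced with all singularities on $\tilde C$ again reduced non-degenerate (local biholomorphism preserves the normal forms of Theorem~\ref{M-M}), and crucially $g^*(N^*_{\mathcal F}\otimes\mathcal O_X(C)\vert_C)=g^*F$ is trivial on $\tilde C$. By Proposition~\ref{P:Resíduo e Representação} (in the direction that triviality of the restricted logarithmic conormal bundle forces triviality of the residual representation), the residual representation $\rho_{(\mathcal G,\tilde C)}$ is therefore trivial. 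I also need $\tilde C$ to be exceptional in $Y$: this is exactly the consequence of Grauert's Theorem~\ref{T:Grauert Excepcional Geral} quoted in the excerpt, namely that the preimage of an exceptional curve under a finite covering is exceptional.

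With $\tilde C\subset Y$ compact and exceptional and $\rho_{(\mathcal G,\tilde C)}$ trivial, Theorem~\ref{T:Trivial Geral} applies directly and yields a singularity $\tilde p\in\mathrm{Sing}(\mathcal G)\cap(\tilde C-\mathrm{Sing}(\tilde C))$ together with a separatrix $\tilde S$ of $\mathcal G$ through $\tilde p$ not supported on $\tilde C$. The final step is descent: since $g$ is a local biholomorphism away from where it might identify sheets, the image $S=g(\tilde S)$ is a $\mathcal F$-invariant curve germ through $p=g(\tilde p)\in\mathrm{Sing}(\mathcal F)\cap(C-\mathrm{Sing}(C))$, and because $\tilde S$ is not contained in $\tilde C=g^{-1}(C)$, its image $S$ is not contained in $C$.

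I expect the main obstacle to be the first step, namely producing an honest line bundle $L$ on a neighbourhood $U$ of $C$ whose restriction to $C$ is $F$ and which genuinely has order $k$ in $\mathrm{Pic}(U)$, so that the covering trick applies verbatim. One must shrink $U$ to a suitable (e.g. strongly pseudoconvex, Stein-like) neighbourhood so that the restriction map $\mathrm{Pic}(U)\rightarrow\mathrm{Pic}(C)$ is well-behaved and the order of $L$ is controlled by the order of $F$; this is where the analytic geometry of exceptional neighbourhoods, again via Theorem~\ref{T:Grauert Excepcional Geral}, does the real work. The remaining verifications—that pullback preserves reducedness, non-degeneracy, and the normal-crossing structure, and that the separatrix descends—are routine because $g$ is unramified.
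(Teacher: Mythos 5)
Your ingredients (covering trick, Grauert's Theorem \ref{T:Grauert Excepcional Geral} for exceptionality of the preimage, reduction to Theorem \ref{T:Trivial Geral}) are exactly the paper's, but your logical arrangement has a genuine gap at the very first step. The identification you start from --- ``$\rho$ torsion of order $k$ means $F^{\otimes k}=\mathcal O_C$'' for $F=N^*_{\mathcal F}\otimes\mathcal O_X(C)\vert_C$ --- rests on Proposition \ref{P:Resíduo e Representação Generalizados} (and, behind it, Proposition \ref{P:Trivialidade}), and both of these carry the hypothesis $Sing(\mathcal F)\cap C=Sing(C)$, i.e.\ no singularities of the foliation on $C$ away from the crossings. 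That hypothesis is \emph{not} part of Theorem \ref{T:Trivial Geral Generalizado}: the standing assumptions of this subsection only require all singularities on $C$ to be reduced non-degenerate. If some singularity $q$ lies on $C_i-Sing(C)$, then by the intersection formula (Theorem \ref{T:Normal Intersecta}) the index $Z(\mathcal F,C_i)$ picks up a positive contribution at $q$, so $N^*_{\mathcal F}\otimes\mathcal O_X(C)\vert_{C_i}$ has negative degree; then $F\notin T(C)$, $F$ is neither flat nor torsion, and it does not correspond to $\rho$ at all. The same objection hits your later appeal to Proposition \ref{P:Resíduo e Representação} upstairs for $(\mathcal G,\tilde C)$.

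Worse, the two halves of your argument exclude each other: your first step is legitimate precisely when $Sing(\mathcal F)\cap(C-Sing(C))=\varnothing$, and since $g$ is unramified this forces $Sing(\mathcal G)\cap(\tilde C-Sing(\tilde C))=\varnothing$, so there is never a singularity $\tilde p$ upstairs for your descent step to push down --- in that situation Theorem \ref{T:Trivial Geral} produces a contradiction, not a point. This is exactly why the paper runs the same covering argument \emph{by contradiction}: assume $Sing(\mathcal F)\cap(C-Sing(C))=\varnothing$; then (and only then) $F$ lies in $T(C)$ and is the bundle of $\rho$, a $\mathbb C^*$-flat extension $L$ with $L^{\otimes k}$ trivial exists, the cyclic covering makes $\rho_{(\mathcal G,\,g^{-1}(C))}$ trivial, $g^{-1}(C)$ is exceptional, and Theorem \ref{T:Trivial Geral} contradicts the emptiness of $Sing(\mathcal G)\cap(g^{-1}(C)-Sing(g^{-1}(C)))$. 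Your proof is repaired by restructuring it this way, or equivalently by a preliminary case split: if some singularity lies on $C-Sing(C)$, it is reduced non-degenerate, hence by the normal form of Theorem \ref{M-M} it already has a separatrix transverse to $C$ and you are done; otherwise run your argument, which then ends in a contradiction rather than in a descent. Incidentally, the obstacle you flagged as the main one (extending $F$ to a bundle $L$ of order exactly $k$ on a neighbourhood) is a real but secondary point, and it too only makes sense after the contradiction hypothesis, since flatness of $F$ is what makes a flat extension available.
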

\begin{proof}
Supose, by contradction, that there is no singularity of the foliation in $C-Sing(C)$. Let $F=F_{\rho}=N^*_{ \mathcal F}\otimes \mathcal O_X(C)\vert_C$ be the line bundle over $C$ associated to $\rho$ (see the subsection \ref{S:Grupo de Picard de Curvas}) and consider a $\mathbb C^*$-flat extension $L$ of $F$ to a neighbourhood $U$ of $C$ in such a way that $L^{\otimes k}$ is trivial. Then, by the covering trick mentioned at the beginning, we obtain a cyclic covering of order $k$ from a surface $Y$ over $X$, say $g:Y\rightarrow X$, with $g^*L$ trivial.

Consider $\mathcal G=g^*\mathcal F$ and $D=\sum_{j=1}^{kn} D_j=g^{-1}(C)$. Since we assume $Sing(\mathcal F)\cap(C-Sing(C))= \varnothing$, then also $Sing(\mathcal G)\cap(D-Sing(D))= \varnothing$. Thus
$$
N^*_{\mathcal G}\otimes \mathcal O_X(D)\vert_D=g^*F=g^*L\vert_D=\mathcal O_D
$$
hence the representation $\tilde{\rho}=\rho_{(\mathcal G, D)}$ is trivial (see the subsection \ref{S:Grupo de Picard de Curvas} and the Proposition \ref{P:Resíduo e Representação Generalizados}). By Theorem \ref{T:Grauert Excepcional Geral}, the curve $D$ is exceptional. Therefore, from Theorem \ref{T:Trivial Geral}, $Sing(\mathcal G)\cap(D-Sing(D))\neq \varnothing$, a contradction.
\end{proof}

\subsection{Foliations with $\mathbb Q$-Gorenstein Normal Bundle}\label{S:Folheações com Fibrado Normal Gorenstein}

A sheaf $\mathcal S$ on a singular surface is called {\it $\mathbb Q$-Gorenstein} if there is a positive integer $k>0$ such that the $k$-th tensor power $\mathcal S^{\otimes k}$ is locally trivial.% (even at the singularities).

Now we develop the proof of the {\bf Theorem \ref{T:Teorema 1}}:
\begin{thm}[{\bf Theorem \ref{T:Teorema 1}}]\label{T:Gorenstein}
Let $\mathcal F$ be a foliation on the normal singular surface $X$. If the foliation has no saddle-node in its resolution over the singularity $p\in X$ and the normal sheaf $N_{\mathcal F}$ is $\mathbb Q$-Gorenstein, then $\mathcal F$ has a separatrix through $p$.
%Let $\mathcal F$ be a foliation on the singular normal suface $X$. Let $f:Y\rightarrow X$ be a bimeromorphism which resolve the singularity $p\in X$ and such that the induced foliation $\mathcal G=\pi^*\mathcal F$ is reduced along $E=f ^{-1}(p)$.
%If $\mathcal G$ has no saddle-node in the exceptional divisor $E$ and the normal sheaf of $\mathcal F$ is $\mathbb Q$-Gorenstein, then $\mathcal F$ has a separatrix trough $p$.
\end{thm}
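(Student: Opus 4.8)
The plan is to argue by contradiction, reducing the statement on the singular surface $X$ to the torsion criterion already established on a resolution (Theorem \ref{T:Trivial Geral Generalizado}). Suppose $\mathcal F$ has no separatrix through $p$. First I would fix a resolution $\pi:Y\to X$ of the singularity that simultaneously reduces the singularities of the pulled-back foliation $\mathcal G=\pi^*\mathcal F$; after finitely many extra blow-ups I may assume the exceptional divisor $E=\pi^{-1}(p)=\sum_{i=1}^n E_i$ has smooth components meeting in simple normal crossings with $\#(E_i\cap E_j)\le 1$. By hypothesis $\mathcal G$ is reduced and without saddle-node, and because $p$ is a normal singularity, $E$ is connected with negative definite intersection matrix $M=[E_i\cdot E_j]$, hence exceptional by Grauert's criterion (Theorem \ref{Grauert Excepcional}).

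Next I would observe that the contradiction hypothesis forces the geometry of $\mathcal G$ along $E$ to be exactly the situation analysed in Section \ref{chapter: R}. Indeed, no component of $E$ can be non-invariant (dicritical), and there can be no singularity of $\mathcal G$ on $E-Sing(E)$: in either case a transverse leaf, or the transverse separatrix of a reduced non-degenerate singularity off the crossings, is an invariant curve transverse to $E$, and $\pi$ maps it to a separatrix of $\mathcal F$ through $p$, contrary to our assumption. Therefore every $E_i$ is $\mathcal G$-invariant and $Sing(\mathcal G)\cap E=Sing(E)=\bigcup_{i\neq j}E_i\cap E_j$, so Proposition \ref{P:Trivialidade} applies with $C=E$ and gives $L\vert_{E_i}=\mathcal O_{E_i}$, where $L=N^*_{\mathcal G}\otimes\mathcal O_Y(E)$ is the logarithmic conormal bundle. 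In particular $F=L\vert_E$ lies in $T(E)$ and corresponds to the residual representation $\rho=\rho_{(\mathcal G,E)}$ by Proposition \ref{P:Resíduo e Representação Generalizados}.

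The heart of the proof is converting the $\mathbb Q$-Gorenstein hypothesis into torsion of $\rho$. Since $N_{\mathcal F}$ is $\mathbb Q$-Gorenstein there is $k>0$ with $N_{\mathcal F}^{\otimes k}$ locally free, hence trivial on a neighbourhood of $p$, so $\pi^*(N_{\mathcal F}^{\otimes k})$ is a line bundle that is trivial near $E$. On $Y-E\cong X-\{p\}$ the bundle $N_{\mathcal G}^{\otimes k}$ coincides with $\pi^*(N_{\mathcal F}^{\otimes k})$, so their difference is a line bundle on $Y$ trivial off $E$, i.e. of the form $\mathcal O_Y(\sum_i a_iE_i)$ with integers $a_i$; combining this with the triviality of $\pi^*(N_{\mathcal F}^{\otimes k})$ near $E$ yields $L^{\otimes k}\cong\mathcal O_Y\bigl(\sum_i b_iE_i\bigr)$ on a neighbourhood of $E$, where $b_i=k-a_i$. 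Restricting to $E$ and computing degrees on each component gives $\deg\bigl(L^{\otimes k}\vert_{E_i}\bigr)=k\deg(F\vert_{E_i})=0$ because $F\in T(E)$, so the vector $(b_i)$ satisfies $M(b)=0$; as $M$ is invertible this forces $b_i=0$ for all $i$. Hence $L^{\otimes k}\vert_E=\mathcal O_E$, that is, $\rho$ is torsion of order dividing $k$.

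Finally, with $E$ exceptional, $\mathcal G$ reduced without saddle-node along $E$, $Sing(\mathcal G)\cap E=Sing(E)$, and $\rho$ torsion, Theorem \ref{T:Trivial Geral Generalizado} produces a singularity $q\in Sing(\mathcal G)\cap(E-Sing(E))$ carrying a separatrix not supported on $E$. Projecting it by $\pi$ gives an invariant analytic curve of $\mathcal F$ through $p$, contradicting our standing assumption, which proves the theorem. The step I expect to be the main obstacle is the identification $L^{\otimes k}\cong\mathcal O_Y(\sum_i b_iE_i)$ near $E$: this requires comparing the reflexive normal sheaf $N_{\mathcal F}$ on the singular surface with the genuine line bundle $N_{\mathcal G}$ on $Y$, justifying that $\pi^*(N_{\mathcal F}^{\otimes k})$ is an honest line bundle (which is precisely where $\mathbb Q$-Cartierness enters) and controlling the discrepancy divisor supported on $E$.
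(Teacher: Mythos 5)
Your proposal follows the same route as the paper: argue by contradiction, use the contradiction hypothesis to place all singularities of $\mathcal G$ at the crossings of $E$, convert the $\mathbb Q$-Gorenstein hypothesis into an identification $N_{\mathcal G}^{\otimes k}\cong\mathcal O_Y(\sum_i a_iE_i)$ near $E$, kill the coefficients with the (invertible, negative definite) intersection matrix, conclude that the residual representation is torsion, and invoke Theorem \ref{T:Trivial Geral Generalizado}. The intersection-theoretic part of your argument is correct and is essentially the paper's computation (your $\deg(L\vert_{E_i})=0$ via Proposition \ref{P:Trivialidade} is the same identity as the paper's $N_{\mathcal G}\cdot E_j=E\cdot E_j$ from Theorem \ref{T:Normal Intersecta}), and your explicit treatment of dicritical components and of singularities off the corners is a point the paper leaves implicit.

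The genuine gap is exactly the step you flagged, and it is not a mere technicality: the assertion that a line bundle on a neighbourhood of $E$ which is trivial on the complement of $E$ is ``i.e.\ of the form $\mathcal O_Y(\sum_i a_iE_i)$'' is not a tautology in the analytic category. To produce such an isomorphism one must extend the nonvanishing section of $N_{\mathcal G}^{\otimes k}$ defined on $V-E$ to a \emph{meromorphic} section of $N_{\mathcal G}^{\otimes k}$ on all of $V$, and then take its divisor; but a nonvanishing holomorphic section off a divisor need not extend meromorphically across it (already $e^{1/z}$ on $\mathbb D^2-\{z=0\}$, viewed as a section of the trivial bundle, shows the naive extension fails for a general divisor in an open surface). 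What rescues the argument is precisely the exceptionality of $E$: this is the content of the paper's Proposition \ref{P:Gorenstein}, which first uses Grauert's theorem (Theorem \ref{T:Grauert Excepcional Geral}) to obtain a strongly pseudoconvex neighbourhood $V$ of $E$, and then applies Ueda's extension lemma (Lemma \ref{Extension}) to extend the trivializing section meromorphically, with zeros and poles supported on $E$. So your outline is the paper's proof with this one analytic ingredient missing; supplying it (by Ueda's lemma, or equivalently by pushing forward to the normal Stein blow-down and using Hartogs-type extension across $p$) closes the argument. A last cosmetic difference: you only obtain $L^{\otimes k}\vert_E=\mathcal O_E$ rather than triviality of $L^{\otimes k}$ on a neighbourhood of $E$, but that is enough, since Theorem \ref{T:Trivial Geral Generalizado} only requires the residual representation to be torsion, via Proposition \ref{P:Resíduo e Representação Generalizados}.
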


To prove the theorem, we need some previous results.

\begin{lemma}[{\bf Lemma 5} of \cite{Ueda}]\label{Extension}
Let $X$ be a  smooth compact complex surface, $K$ a compact subset of $X$ and $B$ be a holomorphic vector bundle over $X$. If $X-K$ is strictly pseudoconvex, then every section $s$ of $B$ over $X-K$ can be extended as a meromorphic section $\tilde{s}$ over all $X$.
\end{lemma}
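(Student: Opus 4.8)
The plan is to reduce the meromorphic extension to two geometric steps — a holomorphic extension across the strictly pseudoconvex levels near $\partial K$, and a meromorphic extension across the compact analytic core sitting inside $K$ — and then to control the resulting poles using the compactness of $X$ together with the finiteness theorems for strongly pseudoconvex manifolds. Write $\mathcal{B}=\mathcal{O}(B)$ for the locally free sheaf of germs of holomorphic sections of $B$, so that $s\in H^0(X-K,\mathcal{B})$, and recall that a meromorphic extension is a global section of $B$ over $X$ which is holomorphic off a proper analytic subset, i.e. a holomorphic section of some twist $\mathcal{B}(D)$.

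First I would exploit the strict pseudoconvexity hypothesis to produce a strictly plurisubharmonic function $\varphi$ near $\partial K$. Along the smooth strictly pseudoconvex levels of $\varphi$, the classical Levi (Hartogs--Bochner type) extension theorem for a strictly pseudoconvex hypersurface in a surface applies, and lets me extend $s$ across $\partial K$, sweeping out the domain of definition as far as the level sets remain smooth and strictly pseudoconvex. The only locus at which this sweep must stop is the \emph{maximal compact analytic subset} $A\subset K$. On a surface such an $A$ is a compact curve, hence an effective divisor. Thus I obtain a holomorphic section of $\mathcal{B}$ on $X\setminus T$, where $T$ is an arbitrarily thin tubular neighbourhood of $A$.

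The remaining and genuinely delicate point is to cross $A$, and here I would argue meromorphically rather than holomorphically. Since $X$ is compact and $A$ is a divisor, the sheaves $\mathcal{B}(mA)=\mathcal{B}\otimes\mathcal{O}_X(mA)$ form an increasing family all agreeing with $\mathcal{B}$ away from $A$, so it suffices to realize $s$ as the restriction of a global section of some $\mathcal{B}(mA)$. The obstruction to extending the holomorphic section on $X\setminus T$ across $A$ lives in a cohomology group supported near $A$; the strict pseudoconvexity of the core (Grauert's finiteness theorem for strongly pseudoconvex manifolds, equivalently Serre duality on the compact surface $X$) guarantees that after twisting by a sufficiently large multiple $mA$ this obstruction is killed. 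Consequently the extended section has at worst a pole of order at most $m$ along $A$, so it is meromorphic with polar set contained in the divisor $A$.

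Assembling the two steps yields a global section $\tilde{s}$ of $\mathcal{B}(mA)$, that is, a meromorphic section of $B$ over $X$ which by construction restricts to $s$ on $X-K$, as desired. I expect the main obstacle to be precisely the crossing of $A$: confining the singular locus to the divisor $A$ and bounding its pole order uniformly, rather than ending up with an essential singularity. This is exactly where both hypotheses become indispensable — the pseudoconvexity of the core furnishes the finiteness that makes a \emph{finite} pole order suffice and identifies the obstruction as supported on a compact curve, while the compactness of $X$ turns that curve into a genuine divisor and renders the twisted cohomology finite-dimensional.
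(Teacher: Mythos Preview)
The paper does not prove this lemma at all: it is quoted verbatim as Lemma~5 of Ueda \cite{Ueda} and used as a black box in the proof of Proposition~\ref{P:Gorenstein}. So there is no ``paper's own proof'' to compare against; what follows is an assessment of your sketch on its own merits.

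Your two-step architecture (push holomorphically through the strictly pseudoconvex levels, then deal meromorphically with the residual core) is the right picture, and Step~1 is essentially fine: since $X-K$ is strongly pseudoconvex, the boundary $\partial K$ is strictly pseudoconcave from the $K$ side, and Hartogs--Levi extension lets you enlarge the domain of $s$ past $\partial K$. The substantive gap is your claim that this sweep terminates exactly at a \emph{maximal compact analytic subset} $A\subset K$, which you then declare to be a divisor. Nothing in the hypotheses says $K$ contains any analytic subset at all; ``maximal compact analytic subset'' is a notion attached to a strongly pseudoconvex space via Remmert reduction, and that space here is $X-K$, not $K$. You need an honest argument (e.g.\ via envelopes of holomorphy of pseudoconcave domains, or by first shrinking $K$ to a minimal compact obstruction set and then proving analyticity of that set) before you are entitled to an $A$ that is a curve. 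Without this, Step~2 has nothing to twist by.

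Step~2 is also more of a slogan than a proof. ``The obstruction lives in a cohomology group supported near $A$ and is killed by twisting by $mA$'' is the correct mechanism in spirit, but you have not identified the group, nor explained why a uniform $m$ works for the given section rather than producing an essential singularity. One way to make this precise is to use Grauert's finiteness for the strongly pseudoconvex neighbourhood (so $\dim H^1$ is finite for every coherent sheaf), compare the filtrations $H^0(X,\mathcal{B}(mA))$ as $m$ grows, and show stabilisation forces the extension. Alternatively, and closer to Ueda's context, one can work directly in a strongly pseudoconvex tubular neighbourhood of the exceptional curve and invoke Grauert's direct-image finiteness there. Either way, the step needs to be written out.

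In short: the strategy is sound and recognisably the standard one, but as written it assumes the existence and divisorial nature of $A$ and hand-waves the cohomological bound on the pole order; both points require real work before this is a proof.
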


\begin{prop}\label{P:Gorenstein}
Let $\mathcal F$ be a foliation on the singular normal surface $X$. Let $f:Y\rightarrow X$ be a resolution singularity at $p\in X$.
If the normal sheaf $N_{\mathcal F}$ is $\mathbb Q$-Gorenstein, then, in neighbourhood $V$ of $E$,
$$N_{\mathcal G}^{\otimes k}\vert_V=\mathcal O_Y(\sum_{i=1}^n a_iE_i)\vert_V$$
where $k$ is the smallest integer for which $N_{\mathcal F}^{\otimes k}$ is trivial in neighbourhood of $p$, the $a_i$ are integers and $f^{-1}(p)=E=\sum_{i=1}^n E_i$, where each $E_i$ is an irreducible component.
\end{prop}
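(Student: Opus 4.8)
The plan is to reduce the statement to a local fact about the exceptional divisor: a holomorphic line bundle on a neighbourhood of $E$ that is holomorphically trivial off $E$ is isomorphic to $\mathcal O_Y(\sum_{i=1}^n a_iE_i)$ for suitable integers $a_i$. Writing $\mathcal G=f^*\mathcal F$, I would first fix the data coming from the $\mathbb Q$-Gorenstein hypothesis: let $k>0$ be the smallest integer for which $N_{\mathcal F}^{\otimes k}$ is locally free near $p$, and shrink to a Stein neighbourhood $W$ of $p$ (possible since $p$ is a normal point) so that $N_{\mathcal F}^{\otimes k}\vert_W\simeq \mathcal O_W$. Put $V=f^{-1}(W)$, a neighbourhood of $E$. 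Since $Y$ is smooth, $\mathcal G$ has a genuine normal line bundle $N_{\mathcal G}$, so $N_{\mathcal G}^{\otimes k}$ is an honest line bundle on $V$.

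Next I would compare the two bundles off $E$. The map $f\colon V-E\to W-\{p\}$ is biholomorphic and sends $\mathcal G$ to $\mathcal F$; on a smooth surface the normal bundle is intrinsic to the foliation, so $N_{\mathcal G}\vert_{V-E}\simeq f^{*}\big(N_{\mathcal F}\vert_{W-\{p\}}\big)$. Taking $k$-th tensor powers, $N_{\mathcal G}^{\otimes k}\vert_{V-E}$ is the pullback of the trivial bundle $N_{\mathcal F}^{\otimes k}\vert_{W-\{p\}}$, hence trivial. Thus $N_{\mathcal G}^{\otimes k}$ is a line bundle on $V$ carrying a nowhere-vanishing holomorphic section $s$ on $V-E$.

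The heart of the argument is then to extend $s$ to a meromorphic section $\tilde s$ of $N_{\mathcal G}^{\otimes k}$ over all of $V$, and this is the step I expect to be the main obstacle. Here I would invoke the extension result Lemma \ref{Extension}: because $E$ is exceptional, Grauert's theorem (Theorem \ref{T:Grauert Excepcional Geral}) furnishes a strongly pseudoconvex neighbourhood of $E$ in which $E$ is the maximal compact analytic subset, and after passing to a smooth compact model containing $V$ in which $E$ remains exceptional, the hypotheses of Lemma \ref{Extension} are met, so $s$ extends meromorphically across $E$. The delicate points to verify are the reconciliation of the purely local (germ) situation with the compact-surface hypothesis of Lemma \ref{Extension} and the correct pseudoconvexity bookkeeping near $E$, which is precisely what forces the extension to be only meromorphic rather than holomorphic (so that the exponents $a_i$ may be negative).

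Finally, granting $\tilde s$, I would read off the divisor. Since $s$ is holomorphic and nowhere zero on $V-E$, the zeros and poles of $\tilde s$ all lie on $E$, so $(\tilde s)=\sum_{i=1}^n a_iE_i$ with $a_i\in\mathbb Z$. As a meromorphic section whose divisor equals $D$ trivializes $\mathcal O_Y(D)$, this gives $N_{\mathcal G}^{\otimes k}\vert_V\simeq \mathcal O_Y\big(\sum_{i=1}^n a_iE_i\big)\vert_V$, which is the asserted identity.
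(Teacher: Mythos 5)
Your proposal follows essentially the same route as the paper's own proof: pull back the local trivialization of $N_{\mathcal F}^{\otimes k}$ near $p$ to obtain a nowhere-vanishing holomorphic section of $N_{\mathcal G}^{\otimes k}$ on $V-E$, extend it meromorphically across $E$ via Lemma \ref{Extension} (with Theorem \ref{T:Grauert Excepcional Geral} supplying the strictly pseudoconvex neighbourhood), and read off that the divisor of the extension is supported on $E$. The one ``delicate point'' you flag --- that Lemma \ref{Extension} is stated for a compact surface while $V$ is only a neighbourhood of the exceptional curve --- is passed over silently in the paper, which applies the lemma directly to $V$, so your treatment is, if anything, slightly more careful than the original.
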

\begin{proof}
First note that from Theorem \ref{T:Grauert Excepcional Geral} the exceptional divisor $E$ has a strictly pseudoconvex neighbourhood $V$. For a sufficiently small neighbourhood $U$ of $p$, the sheaf $N_{\mathcal F}^{\otimes k}$ is trivial. For $U$ and $V$ small enough, we can assume $f^{-1}(U)=V$.

Since $N_{\mathcal G}^{\otimes k}=f^*N_{\mathcal F}^{\otimes k}$ is trivial over $V-E$, then there is a holomorphic section without zeros $s$ of the bundle $N_{\mathcal G}^{\otimes k}$ over $V-E$. The Lemma \ref{Extension} above imply that $s$ can be extended as a meromorphic section $\tilde s$ over all $V$, with polos or zeros along $E$. This finish the proof.
\end{proof}

%\begin{lemma}\label{L:Gorenstein Fraco}
%Let $\mathcal G$ be a reduced foliation on $Y$ tangent to the curve $E=\sum_{i=1}^m E_i$, where each $E_i$ is a smooth curve in $Y$.
%Suppose that $\mathcal G$ has no sadlle-node in $E$ and that $Sing(\mathcal G)\cap E=Sing(E)=\bigcup_{i\neq j} E_i \cap E_j.$ If $N^*_{\mathcal G}\otimes \mathcal O_Y(E)\vert_{E}^{\otimes k}=\mathcal O_E$ for some integer $k>0$, then the matrix of intersection of $E$ is not negative definite ($E$ is not exceptional).
%\end{lemma}
%\begin{proof}
%If $N^*_{\mathcal G}\otimes \mathcal O_Y(E)\vert_{E}^{\otimes k}=\mathcal O_E$, then, by Proposition \ref{P:Resíduo e Representação Generalizados}, the representation $\rho_{(\mathcal G,E)}$ is torsion. Then by Theorem \ref{T:Trivial Geral Generalizado} the curve $E$ cannot be exceptional.
%\end{proof}

\begin{proof}[{\bf Proof of the Theorem \ref{T:Gorenstein}}]
Let $f:Y\rightarrow X$ be a resolution singularity at $p\in X$ and such that the induced foliation $\mathcal G=\pi^*\mathcal F$ is reduced along $E=f ^{-1}(p)$. Suppose, by contradiction, that there is no singular point of $\mathcal G$ in $E-Sing(E)$. Since $N_{\mathcal F}$ is a $\mathbb Q$-Gorenstein sheaf, it follows from Proposition \ref{P:Gorenstein} that, in neighbourhood of $E$,
$$N_{\mathcal G}^{\otimes k}=\mathcal O_Y(\sum_{i=1}^n a_iE_i)$$
where $k>0$, and the $a_i$ are integers, and $E=\sum_{i=1}^n E_i$, each $E_i$ an irreducible component of $E$.

Since there is no saddle-node in $E$, then, by the intersection formula of the normal bundle given by Theorem \ref{T:Normal Intersecta}, we have $N_{\mathcal G}\cdot E_j=E\cdot E_j$, that is, $(N_{\mathcal G}-E)\cdot E_j=0$ for $j=1,...,n$. Then $(\sum_{i=1}^n a_iE_i-kE)\cdot E_j=0$, that is, $(\sum_{i=1}^n (a_i-k)E_i)\cdot E_j=0$ for $j=1,...,n$. Hence
$$(\sum_{i=1}^n (a_i-k)E_i)^2=0.$$

Therefore, since the matrix of intersection of $E$ is negative definite (Grauert's Criterion \ref{Grauert Excepcional}), $a_i=k$ for $i=1,...,n$, that is,
$$N_{\mathcal G}^{\otimes k}=\mathcal O_Y(k\sum_{i=1}^n E_i)=\mathcal O_Y(kE)$$ 
Hence by Theorem \ref{T:Trivial Geral Generalizado} there is a singular point of $\mathcal G$ in $E-Sing(E)$ and  separatrix trough it not supported on $E$, and which is projected over a separatrix of $\mathcal F$ trough $p$.
\end{proof}

The proof just presented shows something more:

\begin{thm}\label{T:Quase Gorenstein}
Let $\mathcal F$ be a foliation on the singular normal surface $X$. Let $f:Y\rightarrow X$ be a resolution singularity at $p\in X$ and such that the induced foliation $\mathcal G=f^*\mathcal F$ is reduced along $E=f^{-1}(p)$. If $\mathcal G$ has no saddle-node in the exceptional divisor $E$ and
$$N_{\mathcal G}\vert_E^{\otimes k}=\mathcal O_Y(\sum_{i=1}^n a_iE_i)\vert_E$$
where $a_1,...,a_n\in \mathbb Z$, then $\mathcal F$ has a separatrix trough $p$.
\end{thm}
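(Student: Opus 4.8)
The plan is to recycle the proof of Theorem~\ref{T:Gorenstein} almost verbatim. The only place where the $\mathbb Q$-Gorenstein hypothesis actually entered that argument was through Proposition~\ref{P:Gorenstein}, which produced the identity $N_{\mathcal G}^{\otimes k}=\mathcal O_Y(\sum_{i=1}^n a_iE_i)$ on a whole neighbourhood of $E$; every subsequent deduction, however, only uses this identity \emph{after restriction to} $E$. Since the restricted identity $N_{\mathcal G}\vert_E^{\otimes k}=\mathcal O_Y(\sum_{i=1}^n a_iE_i)\vert_E$ is now taken as a hypothesis, the same chain of deductions should go through unchanged, and this is precisely the content of the remark that ``the proof just presented shows something more''.

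Concretely, I would argue by contradiction, assuming that $\mathcal G$ has no singularity in $E-Sing(E)$. Then all singularities of $\mathcal G$ on $E$ sit at the crossings $Sing(E)$ and, since there are no saddle-nodes, they are reduced non-degenerate; in particular the residual representation $\rho_{(\mathcal G,E)}$ is defined and the curve $C=E$ satisfies the hypotheses of Theorem~\ref{T:Trivial Geral Generalizado}. Next, the absence of saddle-nodes together with the intersection formula (Theorem~\ref{T:Normal Intersecta}) gives $N_{\mathcal G}\cdot E_j=E\cdot E_j$, i.e.\ $(N_{\mathcal G}-E)\cdot E_j=0$ for each $j$.

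The key observation is that every intersection number $N_{\mathcal G}\cdot E_j=\deg\bigl(N_{\mathcal G}\vert_{E_j}\bigr)$ depends only on the restriction $N_{\mathcal G}\vert_E$; I may therefore substitute the hypothesis and rewrite $(N_{\mathcal G}-E)\cdot E_j=0$ as $\bigl(\sum_{i=1}^n(a_i-k)E_i\bigr)\cdot E_j=0$ for all $j$, whence $\bigl(\sum_{i=1}^n(a_i-k)E_i\bigr)^2=0$. Since the intersection matrix of the exceptional divisor $E$ is negative definite (Grauert's criterion, Theorem~\ref{Grauert Excepcional}), this forces $a_i=k$ for every $i$. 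Thus $N_{\mathcal G}\vert_E^{\otimes k}=\mathcal O_Y(kE)\vert_E$, equivalently $\bigl(N^*_{\mathcal G}\otimes\mathcal O_Y(E)\bigr)\vert_E$ is $k$-torsion in $\mathrm{Pic}(E)$; since this bundle lies in $T(E)$ by Proposition~\ref{P:Trivialidade}, the description of $T(E)$ in Subsection~\ref{S:Grupo de Picard de Curvas} together with Proposition~\ref{P:Resíduo e Representação Generalizados} shows that the representation $\rho_{(\mathcal G,E)}$ is torsion. Applying Theorem~\ref{T:Trivial Geral Generalizado} to the compact exceptional curve $C=E$ then yields a singularity of $\mathcal G$ in $E-Sing(E)$ carrying a separatrix not supported on $E$, contradicting our assumption; pushing this separatrix forward by $f$ produces the desired separatrix of $\mathcal F$ through $p$.

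The only step needing genuine care is this last substitution: one must check that passing from the neighbourhood-identity of the $\mathbb Q$-Gorenstein proof to the restricted-to-$E$ hypothesis costs nothing, i.e.\ that the equations $(N_{\mathcal G}-E)\cdot E_j=0$, and hence the conclusion $a_i=k$, are governed entirely by $N_{\mathcal G}\vert_E$ and the intersection form of $E$. Once this is granted, the remainder is identical to the proof of Theorem~\ref{T:Gorenstein}, and I expect no further obstacle.
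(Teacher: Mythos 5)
Your proposal is correct and is precisely the argument the paper intends: the paper gives no separate proof of Theorem \ref{T:Quase Gorenstein}, remarking only that the proof of Theorem \ref{T:Gorenstein} ``shows something more,'' and your write-up makes explicit why that is so --- every step of that proof (the intersection-formula identity $(N_{\mathcal G}-E)\cdot E_j=0$, the negative-definiteness argument forcing $a_i=k$, and the resulting torsion of $\rho_{(\mathcal G,E)}$ feeding into Theorem \ref{T:Trivial Geral Generalizado}) depends only on intersection numbers, hence only on $N_{\mathcal G}\vert_E$, so the $\mathbb Q$-Gorenstein hypothesis can be replaced by the restricted identity. No gaps.
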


%Uma pergunta natural que surge é se poderíamos supor que o feixe tangente $T_{\mathcal F}$ é $\mathbb Q$-Gorenstein e obter, nas hipóteses do Teorema \ref{T:Teorema 1}, existência de separatriz. Conforme \cite{Adolfo}, exemplos naturais respondendo negativamente à questão surgem de campos vetoriais em {\it superfícies de Kato intermediárias}. Superfícies de Kato intermediárias possuem um divisor $D$ (o divisor reduzido maximal de curvas racionais) com suporte conexo e matriz de interseção negativa definida. Algumas delas admitem um campo de vetores holomorfo global $v$. Como a matriz de interseção é negativa definida, então $D$ é invariante pelo campo $v$. Tais campos vetorias são bem entendidos e sabemos que em vizinhança de $D$ as únicas singularidades de $v$ estão em $D$ e qualquer germe de curva invariante passando por $D$ está suportado em $D$. Assim, contraindo $D$ num ponto $p$, obtemos uma superfície singular (em $p$) admitindo um campo de vetores holomorfo sem separatriz passando por $p$. Ou seja, uma folheação $\mathcal F$ numa superfície singular sem separatriz passando por $p$, mas cujo feixe tangente $T_{\mathcal F}$ é Gorenstein.

\bibliographystyle{amsplain}

\end{document}